\algnewcommand\algorithmicinput{\textbf{INPUT:}}
\algnewcommand\INPUT{\item[\algorithmicinput]}
\algnewcommand\algorithmicoutput{\textbf{OUTPUT:}}
\algnewcommand\OUTPUT{\item[\algorithmicoutput]}
\newcommand{\Rips}{\mathcal{R}}
\renewcommand{\P}{\mathcal{P}}
\renewcommand{\O}{\mathcal{O}}
\renewcommand{\D}{\mathcal{D}}
\newcommand{\F}{\mathbb{F}}
\newcommand{\M}{\mathcal{M}}
\newcommand{\rank}{\operatorname{rank}}
\renewcommand{\R}{\mathbb{ R}}
\newcommand{\im}{\operatorname{im}}
\newcommand{\sym}{\operatorname{sym}}
\DeclareMathOperator{\id}{id}
\title{Rips filtrations for quasi-metric spaces and asymmetric functions with stability results}
\author{Katharine Turner}
\address{Mathematical Sciences Institute\\
Australian National University\\
Canberra, ACT\\
Australia}
\email{katharine.turner@anu.edu.au}
\urladdr{}
\numberwithin{equation}{section}
\newtheorem*{proposition*}{Proposition}
\newtheorem{theorem}{Theorem}
\newtheorem{cor}[theorem]{Corollary}
\theoremstyle{definition}
\newtheorem{definition}[theorem]{Definition}
\newtheorem{proposition}[theorem]{Proposition}
\newtheorem{lemma}[theorem]{Lemma}
\newtheorem*{thm*}{Theorem}
\newtheorem*{theorem*}{Theorem}
\theoremstyle{definition}
\begin{document}

\begin{abstract}    
The Rips filtration over a finite metric space and its corresponding persistent homology are prominent methods in topological data analysis to summarise the ``shape'' of data. Crucial to their use is the stability result that says if $X$ and $Y$ are finite metric spaces then the (bottleneck) distance between the persistence diagrams constructed via the Rips filtration is bounded by $2d_{GH}(X,Y)$ (where $d_{GH}$ is the Gromov-Hausdorff distance). A generalisation of the Rips filtration to any \emph{symmetric} function $f:X\times X\to \R$ was defined by Chazal, De Silva and Oudot in \cite{CSO}, where they showed it was stable with respect to the correspondence distortion distance. Allowing asymmetry, we consider four different persistence modules, definable for pairs $(X,f)$ where $f:X\times X\to \R$ is any real valued function. These generalise the persistent homology of the symmetric Rips filtration in different ways. The first method is through symmetrisation. For each $a\in [0,1]$ we can construct a symmetric function $\sym_a(f)(x,y)=a\min \{d(x,y), d(y,x) \}+ (1-a)\max \{d(x,y), d(y,x)\}$. We can then follow the apply the standard theory for symmetric functions and get stability as a corollary.
The second method is to construct a filtration $\{\Rips^{dir}(X)_t\}$ of ordered tuple complexes where tuple $(x_0, x_2, \ldots x_p)\in \Rips^{dir}(X)_t$ if $d(x_i, x_j)\leq t$ for all $i\leq j$. Both our first two methods have the same persistent homology as the standard Rips filtration when applied to a metric space, or more generally to a symmetric function. We then consider two constructions using an associated filtration of directed graphs or preorders. For each $t$ we can define a directed graph $\{D(X)_t\}$ where directed edges $x\to y$ are included in $D(X)_t$ whenever $\max\{f(x,y),f(x,x),f(y,y)\}\leq t$ (note this is when $d(x,y)\leq t$ for $f=d$ a quasi-metric). From this we construct a preorder where $x\leq y$ if there is a path from $x$ to $y$ in $D(X)_t$. We build persistence modules using the strongly connected components of the graphs $D(X)_t$, which are also the equivalence classes of the associated preorders. We also consider persistence modules using a generalisation of poset topology to preorders.

The Gromov-Hausdorff distance, when expressed via correspondence distortions, can be naturally extended as a correspondence distortion distance to set-function pairs $(X,f)$. We prove that all these new constructions enjoy the same stability as persistence modules built via the original persistent homology for symmetric functions. 
\end{abstract}

\maketitle


\section{Introduction}

The Rips filtration over a finite metric space $(X,d)$ is a filtration of simplicial complexes $\{\Rips(X,d)_t\}_{t \in [0,\infty)}$, where $\Rips(X,d)_t$ is the clique complex over the graph whose vertex set is $X$ and edge set  $\{[x,y]:d(x,y)\leq t\}$. It adds topological structure to an otherwise disconnected set of points. The persistent homology of the Rips filtration is widely used in topological data analysis because it encodes useful information about the geometry and topology of the underlying metric space (\cite{chazal2009gromov, ghrist2008barcodes, lee2011discriminative, xia2014persistent}). There are many potential applications for studying data whose structure is a quasi-metric space. Examples includes the web hyperlink quasi-metric space, road networks, and quasi-metrics induced from weighted directed graphs found throughout science (for example, biological interaction graphs \cite{klamt2009computing}  or the connections in neural systems \cite{kaiser2011tutorial, BBP}).
More generally we wish to define and show stability of Rips filtrations for sublevel sets of any (not necessarily symmetric) function $f:X\times X\to \R$.

Historically the Rips filtration was defined as a special increasing family of simplicial complexes built from a finite metric space. A metric space is a set $X$ equipped with a distance function $d:X\times X \to \R$ that satisfies the following properties:
\begin{itemize}
\item[(i)] $d(x,y)\geq 0$ for all $x\in X$ ($d$ is \emph{non-negative})
\item[(ii)] $d(x,y)=d(y,x)$ for all $x,y\in X$ (that is the function $d$ is \emph{symmetric})
\item[(iii)] $d(x,z)\leq d(x,y) + d(y,z)$ (that $d$ satisfies the \emph{triangle inequality})
\item[(iv)] $d(x,y)=0=d(y,x)$ if and only if $x=y$ 
\end{itemize}
For any $r\geq 0$ we define the Rips complex of $X$ at length scale $r$, denoted as $\mathcal{R}(X,d)_r$, as the abstract simplicial complex where $[x_0, x_1, \ldots x_k]\in \mathcal{R}(X,d)_r$ whenever $d(x_i, x_j)\leq r$ for all $i,j$. We can think of $\mathcal{R}(X,d)_r$ as adding a topological structure of length scale $r$. It is easy to check that if $r\leq s$ then $\mathcal{R}(X,d)_r\subset \mathcal{R}(X,d)_s$. We thus can define the Rips filtration of $X$ as the increasing family of simplicial complexes $\{\mathcal{R}(X,d)_r\}_{r\in [0,\infty)}$.

Two classic types of examples of Rips filtrations are examples that come from finite point clouds sitting inside some larger space (such as Euclidean space), and examples built from graphs. If $X\subset \R^{d}$ is a set of points then it inherits a finite metric space structure from that of $\R^d$, the distance function is just the restriction of the Euclidean distance function to the set $X$. Given a graph $G$ (with or without lengths on the edges) we can let the vertices of the graph be the finite set $X$ and then construct a distance function on $X$ by defining $d(x,y)$ as the shortest path length of all the paths from $x$ to $y$ in $G$. 

From the Rips filtration we can produce a persistence module which describes its persistent homology. 
A persistence module is a family of vector spaces $\{V_t:t\in \R\}$ equipped with linear maps $\phi_s^t:V_s\to V_t$ for each pair $s\leq t$, such that $\phi_t^t=\id$ and $\phi_s^t=\phi_s^r\circ \phi_r^t$ whenever $s\leq r\leq t$. The persistence module we construct from the persistent homology of a Rips filtration over $(X,d)$ has vector spaces $\{H_*(\Rips(X,d)_t)\}_{t \in [0,\infty)}$ along with maps on homology induced by inclusions; $\phi_s^t=\iota_*:H_*(\Rips(X,d)_s) \to H_*(\Rips(X,d)_t)$ when $s\leq t$.

Arguable the most important theoretical results in topological data analysis are the stability theorems. These stability results come in variety of forms but generally say that if two sets of input data are close then various persistence modules computed from them are also close. To be specific we need to quantify what is meant by ``close'' for these different kinds of objects.

We can measure how close persistent modules are via whether there exist suitable families of interleaving maps. This distance is closely related to the bottleneck distance between the corresponding persistence diagrams or barcodes. Two persistence modules, $(\{V_t\}, \{\phi_s^t\})$ and $(\{U_t\}, \{\psi_s^t\})$, are called $\epsilon$-interleaved when there exist families of linear maps $\{\alpha_t:V_t \to U_{t+\epsilon}\}$ and $\{\beta_t:U_t\to V_{t+\epsilon}\}$ satisfying natural commuting conditions. There is a pseudo-metric on the space of persistence modules called the interleaving distance, $d_{int}$, which is the infimum of the set of $\epsilon>0$ such that there exists an $\epsilon$-interleaving. 
More details about the interleaving distance is provided in section \ref{sec:background}. In this paper we will be considering a variety of different persistence modules, but we will always use the interleaving distance to quantify ``closeness''.

Gromov-Hausdorff distance is a classical distance between metric spaces. There are many equivalent formulations of Gromov-Hausdorff distance but for the purposes of this paper we will focus on that using correspondences. The set $\M \subset X \times Y$ is a \emph{correspondence} between $X$ and $Y$ if for all $x\in X$ there exists some $y\in Y$ with $(x,y)\in \M$ and for all $y\in Y$ there is some $x\in X$ with $(x,y)\in \M$. Using correspondences we can define the Gromov-Hausdorff distance between $X$ and $Y$ as
\begin{align}
d_{GH}(X,Y) = \frac{1}{2} \inf_{\{\text{correspondences }\M\}}  \sup_{(x_1,y_1), (x_2, y_2) \in \M}|d_X(x_1, x_2) -d_Y(y_1, y_2)|.
\end{align}
Here $ \sup_{(x_1,y_1), (x_2, y_2) \in \M}|d_X(x_1, x_2) -d_Y(y_1, y_2)|$ is the distortion of the correspondence $\M$. We can define the \emph{correspondence distortion distance} between set-function pairs $(X,f:X\times X\to \R)$ and $(Y,g:Y \times Y\to \R)$
by $$d_{CD}((X,f),(Y,g)) = \frac12 \inf_{\M \text{ correspondence between $X$ and $Y$}}  \sup_{(x_1,y_1), (x_2, y_2) \in \M}|f(x_1, x_2) -g(y_1, y_2)|.$$
This agrees with the standard definition for the Gromov-Hausdorff distance when $(X,d_X)$ and $(Y,d_Y)$ are metric spaces. More background and details about the correspondence distortion distance are presented in section \ref{sec:metric}.

%
%
%
%
%


Useful as the Rips filtration for finite metric spaces is, there are scenarios where the input is not a finite metric space. For example, it is common in data analysis to considers data sets $X$ equipped with a dissimilarity measure. A dissimilarity measure is a map $d_X : X \times X\to \R$ that satisfies $d_X(x,x)=0$ and $d_X(x,‰y) = d_X(y,x)$ for all $x,y \in X$, but is not required to satisfy any of the other metric space axioms. In \cite{CSO}, Chazal, De Silva and Oudot generalised the notion of a Rips filtration to cover dissimilarity measures and more generally for any symmetric function $f:X\times X\to \R$. Just as in the finite metric space case, the Rips complex of $X$ with parameter $r$, denoted as $\mathcal{R}(X,f)_r$, is defined as the abstract simplicial complex where $[x_0, x_1, \ldots x_k]\in \mathcal{R}(X,f)_r$ whenever $f(x_i, x_j)\leq r$ for all $i,j$ (including $i=j$).

Persistent homology can be applied to any increasing family of topological spaces so it is then natural to define persistence modules from the persistent homology of Rips filtrations built from any symmetric function. This was shown to be stable in \cite{CSO}.

\begin{theorem*}
Let $f:X\times X \to \R$ and $g:Y\times Y\to \R$ be symmetric functions and $\Rips(X,f), \Rips(Y,g)$ their corresponding Rips filtrations. If $d_{CD}((X,f), (Y,g))$ is finite then for all $\epsilon>d_{CD}((X,f), (Y,g))$, the $k$th homology persistence modules of $\Rips(X,f)$ and $\Rips(Y,g)$ are $\epsilon$-interleaved. In particular, when $(X, d_Y)$ and $(Y, d_Y)$ are compact metric spaces then $\Rips(X,d_X)$ and $\Rips(Y,d_Y)$ are $\epsilon$-interleaved for all $\epsilon>2d_{GH}(X,Y)$.
\end{theorem*}
 
The proofs of the interleaving results in \cite{CSO} didn't have any requirement on the function $f:X\times X\to \R$ except that it had to be symmetric. The purpose of this paper is to complete this generalisation procedure to lose that symmetry requirement. However,  there are multiple ways to  how to use asymmetry information, and so we have explored a variety of different constructions. 

One method is to study related symmetric functions. We can take our original function $f$ and construct a parametric family of related symmetric functions $\sym_a(f)$ where $a\in [0,1]$ and 
\begin{align*}
\sym_a(f)(x,y)= a \min \{f(x,y), f(y,x)\} + (1-a) \max\{f(x,y), f(y,x)\}.
\end{align*}
We can then construct the Rips filtration as in \cite{CSO} for the set-function pair $(X,\sym_a(f))$. 
Notably if $f$ is a symmetric function to begin with then $\sym_a(f)=f$ for all $a\in [0,1]$ and hence this symmetrisation process does give a generalisation of Rips filtrations to any set-function pair. We can show that the correspondence distortion distance between $(X,\sym_a(f))$ and $(Y, \sym_a(g))$ is bounded by that between $(X,f)$ and $(Y,g)$. We gain stability for these persistence modules constructed through this symmetrisation process as a corollary. 

A limitation with using a filtration of simplicial complexes is that a simplex is an inherently symmetric object. An alternative is to use ordered tuple complexes (shortened to OT-complexes). An OT-complex $K$ is a sets of ordered tuples $(v_0, v_1, \ldots v_p)$ such that if $(v_0, v_1, \ldots v_p)\in K$ then $(v_0, v_1, \ldots, \hat{v_i}, \ldots,  v_p)\in K$ for all $i$. Note that repetitions of the $v_j$ are allowed.
Chain complexes, boundary maps, homology and persistent homology can analogously be defined for OT-complexes.
We will define the \emph{directed Rips filtration} of OT-complexes for $f:X\times X\to \R$, as the filtration $\{\Rips^{dir}(X,f)_t\}$ of ordered tuple complexes where tuple $(x_0, x_2, \ldots x_p)\in \Rips^{dir}(X,f)_t$ if $f(x_i, x_j)\leq t$ for all $i\leq j$. We call the persistence module produced using the OT-homology of the directed Rips filtration the \emph{directed Rips persistence module}.

For each simplicial complex there is a canonical OT-complexes with isomorphic homology groups. Furthermore, since these homology isomorphism commute with the maps induced by inclusion, the persistence modules of these corresponding complexes are also isomorphic. This will implies that these  directed Rips filtration is truly generalisations of the Rips filtration built from a symmetric function. We will prove that the persistence modules constructed from these Rips filtrations are stable with respect to the correspondence distortion distance.


The third generalisation considers connected components. The standard dimension $0$ homology can be viewed as the vector space whose elements are linear combinations of connected components in the $1$-skeleton (i.e. the graph containing the $0$ and $1$ cells). When working with directed graphs there are two notions of connected components; weakly and strongly connected. Completely analogous to the traditional connected components story we can consider vector space whose elements are formal linear combinations the equivalence classes of \emph{strongly} connected components in the directed graph which is the $1$-skeleton of the directed Rips filtration.

Given a function $f:X\times X\to \R$, for each real number $t$ we can create a directed graph $D_t$ related to the sublevel set $f^{-1}(-\infty, t]$. $D_t$ should have vertex set $\{x\in X\mid f(x,x)\leq t\}$ and directed edge set $\{x\to y\mid \max\{f(x,x), f(x,y), f(y,y)\}\leq t\}.$ We can not include a directed edge $x\to y$ just when $f(x,y)\leq t$ because of the closure conditions a directed graph has to satisfy. For each $t\in \R$ we have a vector space $V_t$ 
of the formal linear combinations the equivalence classes of strongly connected components of $D_t$. Whenever $s\leq t$ we have an inclusion map from $D_s\subset D_t$ which induces a linear map from $V_s$ to $V_t$. This process directly constructs a persistence module which we call the \emph{strongly connected components persistence module}. We prove that these persistence modules are stable with respect to the correspondence distortion distance. We also provide some psuedocode on how to compute the barcode decomposition of the strongly connected components persistence module using a modification of the union find algorithm.

We also note that the persistence modules generated from formal linear combinations of the weakly connected components has already been covered as the dimension $0$ persistent homology of the filtration by sublevel sets of $\sym_1(f)$. 

Our fourth method uses the directed graphs described above to create a filtration of preorders. Given a directed graph $D$ over vertices $X$ we say $x\leq y$ if there is a path from $x$ to $y$. From a filtration of directed graphs we obtain a filtration of preorders. We then can construct persistence modules using poset topology (which can be generalised for all preorders, not just posets, discussed in the Appendix). We will call these \emph{preorder persistence modules}. We prove that these preorder persistence modules are stable with respect to the correspondence distortion distance. If $f:X\times X\to \R$ is a symmetric function, then the dimension $0$ preorder persistence module is the same as that of the persistent homology of the standard Rips filtration $\Rips(X,f)$ and its higher dimensional preorder persistence modules are always trivial. This implies that preorder persistence modules are describing asymmetry information.

\subsection{Related other works}

Other related work involves approaches in topological data analysis for incorporating asymmetry information. Ordered set homology is used in  \cite{BBP} in order to study the topology of brain networks. There have been a series of papers by Chowdhury and M\'emoli  (\cite{ChowdburyDowker, ChowdburyPath, ChowdburyConvergence}) about other constructions of persistence modules which incorporate asymmetry information.

\section{Directed graphs, Quasi and pseudo metric spaces and the correspondence distortion distance}\label{sec:metric}

The original stability result in topological data analysis for Rips filtrations was for filtrations of simplicial complexes built from metric spaces and the bound between persistence modules in terms of the Gromov-Hausdorff distance. This was generalised in \cite{CSO} to consider symmetric functions and the bound between the functions was the correspondence distance. However, there are many applications where asymmetry naturally arises of which important examples are quasi-metric spaces, such as those constructed as the path metric of some directed graph (with or without weights on the directed edges).

\begin{definition}
A \emph{directed graph} is a ordered pair $D=(V,A)$ where $V$ is a set whose elements are called \emph{vertices}, and $A$ is a set of ordered pairs of vertices called \emph{directed edges} or arrows. A \emph{weighted directed graph} is a directed graph where each arrow is given a non-negative weight.
\end{definition}

Note that a graph can be thought of as a directed graph such that whenever a directed edge $v\to w$ is in $A$ then we also its opposite direction $w\to v$ must also be in $A$. 

\begin{definition}\label{def:metrics}
Let $X$ be a non-empty set and $d:X \times X \to \R$. Consider the following potential properties of $d$:
\begin{enumerate}
\item $d(x,x')\geq 0$ for all $x,x' \in X$
\item $d(x,x')=d(x',x)$ for all $x,x' \in X$
\item For all $x,x' \in X$, $x=x'$ if and only if $d(x,x')=0$ and $d(x',x)=0$
\item $d(x, x'') \leq d(x, x') + d(x', x'')$ for all $x,x',x'' \in X$.
\end{enumerate}
If $(X,d)$ satisfies (1),(2), (3) and (4) it is called a metric space. If $(X,d)$ satisfies (1), (3) and (4) it is called a \emph{quasi-metric space} and we can call $d$ a \emph{quasi-metric}.
If $(X,d)$ satisfies (1), (2) and (4) it is called a \emph{pseudo-metric space} and we can call $d$ a \emph{pseudo-metric}.
If $(X,d)$ satisfies (1) and (4) it is called a \emph{pseudo-quasi-metric space} and we can call $d$ a \emph{pseudo-quasi-metric}.
\end{definition}

We can build examples of these different types of spaces using weighted directed graphs. Given a weighted directed graph $D=(V,A)$, and two vertices $x,y\in V$ we call $x=v_0, v_1, v_2, \ldots v_m=y$ a path from $x$ to $y$ if all of the arrows $v_i \to v_{i+1}$ are in $A$. The length of that path $(x=v_0, v_1, v_2, \ldots v_m=y)$ is the sum of the weights $\sum_{i=0}^{m-1} w(v_i\to v_{i+1})$. Construct $d:V\times V\to \R$  by setting $d(x,y)$ to be the length of the shortest path from $x$ to $y$ (and $\infty$ if no path exists). Since each arrow has non-negative in weight, the function $d$ automatically satisfies (1) in Definition \ref{def:metrics}. By considering the concatenation of paths, we can easily see that $d$ also automatically satisfies (4) in Definition \ref{def:metrics}. Thus $(V,d)$ must always be a quasi-pseudo metric space. 

More generally we can consider any function $f:X\times X \to \mathbb{R}$ not necessarily satisfying any of the properties $(1)$ to $(4)$. It is in this most general setting that we will prove stability theorems.

The \emph{Gromov-Hausdorff distance} between metric spaces $(X, d_X)$ and $(Y, d_Y)$ is often defined by
$$d_{GH}(X,Y) = \inf_{Z, f:X\to Z, g:Y\to Z} d_{H,Z}(f(X),g(Y))$$
where the infimum is taken over all metric spaces $Z$ and isometric embeddings $f$ and $g$ to $Z$ from $X$ and $Y$, respectively. $d_{H,Z}$ is the Hausdorff distance between subsets of $Z$. It is a standard result that the Gromov-Hausdorff distance is a metric on the space of compact metric spaces. 

A useful alternate, but equivalent, formula for the Gromov-Hausdorff distance can be given through correspondences. 
The set $\M \subset X \times Y$ is a \emph{correspondence} between $X$ and $Y$ if for all $x\in X$ there exists some $y\in Y$ with $(x,y)\in \M$ and for all $y\in Y$ there is some $x\in X$ with $(x,y)\in \M$. Using correspondences we know can write
\begin{align*}
d_{GH}(X,Y) = \frac{1}{2} \inf_{\M \text{ correspondence between $X$ and $Y$} }  \sup_{(x_1,y_1), (x_2, y_2) \in \M}|d_X(x_1, x_2) -d_Y(y_1, y_2)|.
\end{align*}

More generally, given functions $f:X\times X \to \R$ and $g:Y\times Y \to \R$ we can call
$dis_{(X,f), (Y,g)}(\mathcal{M})=\sup_{(x_1,y_1), (x_2, y_2) \in \M}|f(x_1, x_2) -g(y_1, y_2)|$ the \emph{distortion} of the correspondence $\M$. We can then define the correspondence distortion distance by minimising this correspondence distortion.

\begin{definition}\label{def:GH}
For set-function pairs $(X,f:X\times X\to \R)$ and $(Y,g:Y\times Y\to \R)$ the \emph{correspondence distance} between them can be defined as
\begin{align*}
d_{CD}((X,f),(Y,g)) &= \frac12 \inf_{\M \text{ correspondence between $X$ and $Y$}} dis_{(X,f), (Y,g)}(\mathcal{M})\\
&= \frac12 \inf_{\M \text{ correspondence between $X$ and $Y$}}  \sup_{(x_1,y_1), (x_2, y_2) \in \M}|f(x_1, x_2) -g(y_1, y_2)|.
\end{align*}
\end{definition}

This agrees with the standard definition for the Gromov-Hausdorff distance when $(X,d_X)$ and $(Y,d_Y)$ are metric spaces.
It is straightforward to verify that $d_{CD}$ is a pseudo-metric on the space of all set-function pairs and a metric on the space of finite quasi-metric spaces. The proofs are analogous to that for metric spaces discussed in \cite{Burago}.




\section{Background - persistence modules}\label{sec:background}

In this section we will cover some background theory on persistence modules and the interleaving distance between persistence modules. This is important because the interleaving distance between persistence modules bounds the bottleneck distance between their corresponding persistence diagrams.
To introduce and motivate the concepts we will provide a brief summary of the theory of persistent homology. We will omit most of the details as we will be phrasing all results in later sections in terms of persistence modules. For more details about the history and applications of persistent homology we refer the reader to \cite{weinberger2011, ghrist2008barcodes, edelsbrunner2008, carlsson2009}. 

Persistent homology describes how the homology groups evolve over an increasing family of topological spaces. Throughout this section let $K=\{K_t\}$ denote a family of reasonable topological spaces such that $K_s \subset K_t$ whenever $s\leq t$. Given $s\leq t$ the \emph{$k$-th dimensional persistent homology group} for $K$ from $s$ to $t$ are the $k^{th}$ dimensional homology classes in $K_s$ that ``persist'' until $K_t$, that is $Z_k(K_s)/(Z_k(K_t)\cup B_k(K_s))$. This is isomorphic to the image of the induced map on homology $\iota_*:H_k(K_s) \to H_k(K_t)$ from the inclusion $K_s \subset K_t$. 

Barcodes and persistence diagrams were introduced as discrete summaries of persistent homology information. Each barcode consists of a multiset of real intervals called bars. The barcode corresponding to the $k^{th}$ dimensional persistent homology of $K$ is  $\{I_1, I_2, \ldots I_n\}$ if, 
for all $s\leq t$, the dimension of $\im (\iota_*:H_k(K_s) \to H_k(K_t))$ equals the number of bars in $\{I_1, I_2, \ldots I_n\}$ that contain $[s,t)$. The corresponding persistence diagram is the multiset of points in $\R^2$, $\{(a_i,b_i)\}$, where $a_i$ and $b_i$ are the endpoints of the bar $I_i$, alongside infinite copies of every point along the diagonal (these diagonal points are acting the role of empty intervals).

Barcodes and persistence diagrams have played a prominent role in applied topology as topological summaries of data. In particular, they can provide insight into the ``shape'' of point cloud data through the persistent homology of the Rips filtration over that point cloud. Much of the power behind the use of barcodes and persistence diagrams comes from stability theorems, such as the stability theorem for the persistent homology of the Rips filtration over a finite metric space. 


Persistence, such as persistent homology of a filtration of simplicial complexes, can be defined directly at an algebraic level. In \cite{zomorodian2005computing}, Zomorodian and  Carlsson introduced the concept of a persistence module and proved that barcodes (and equivalently persistence diagrams) can be defined for persistence modules satisfying reasonable finiteness conditions. It was shown in \cite{chazal2009proximity} that we can define a distance between persistence modules (called the interleaving distance) and that the interleaving distance between persistence modules is a bound on the bottleneck distance of their corresponding persistence diagrams. Throughout this paper we will work directly with persistence modules. 

\begin{definition}
Let $R$ be a commutative ring with unity. A \emph{persistence module} over $A \subset \R$ is a family $\{P_t\}_{t\in A}$ of $R$-modules indexed by real numbers, together with a family of homomorphism $\{\iota_t^s:P_t\to P_s\}$ such that  $\iota_t^r=\iota_s^r \circ \iota_t^s$ for all $t \leq s\leq r$, and $\iota_t^t= \id P_t$. 
\end{definition}


If $R$ is a field then the $P_t$ are all vector fields and the $\iota_t^s$ are linear maps. As is standard in topological data analysis, we will assume throughout that $R$ is the fixed field $F$ (usually taken to be $\F_2$ for computational reasons). 
In the theory of persistence modules there are technical requirements about tameness. We say $\P$ is \emph{tame} if $\rank \iota_t^s$ is always finite for any $s<t$.  A sufficient condition for tameness is that $X$ is finite which is almost always true in any application. It is less straightforward in the constructions involving asymmetry to provide other nice sufficient conditions which would ensure the resulting persistent modules are tame (see the future directions). When the persistence modules are tame then the interleaving results will immediately imply a stability theorem for the persistence diagrams/barcodes. 

The space of persistent modules is a pseudo-metric space under the interleaving distance function. Here we will define the interleaving distance between two persistence modules as the infimum of $\epsilon>0$ such that they are $\epsilon$-interleaved. In this we slightly differ from \cite{chazal2009proximity} where they define both strongly and weakly $\epsilon$-interleaved, both of which are weaker than our notion of interleaving. More details about the pseudo-metric space structure of persistence modules and how the interleaving distance between persistence modules relates to the distances between corresponding persistence diagrams can be found in \cite{chazal2009proximity, ghrist2008barcodes, zomorodian2005computing}. 

\begin{definition}
Two persistence modules $\P^X$ and $\P^Y$ are \emph{$\epsilon$-interleaved} if there exist families of homomorphisms $\{\alpha_t:P^X_t \to P^Y_{t+\epsilon}\}_{t\in \R}$ and $\{\beta_t:P^Y_t \to P^X_{t+\epsilon}\}_{t\in \R}$ such that the diagrams in \eqref{eq:comdia1} and \eqref{eq:comdia2} commute.
 \end{definition}

\begin{equation}\label{eq:comdia1}
\xymatrix{P^X_t \ar[dr]^{\alpha_t} \ar[r]^\iota & P^X_{t'} \ar[dr]^{\alpha_t'} & \\ &  P^Y_{t+\epsilon}  \ar[r]_\iota   & P^Y_{t'+\epsilon}}
\qquad
\xymatrix{& P^X_{t+\epsilon} \ar[r]^\iota & P^X_{t'+\epsilon} \\  P^Y_{t}  \ar[r]_\iota \ar[ur]^{\beta_t}  & P^Y_{t'+\epsilon}\ar[ur]^{\beta_t'} }
\end{equation}

\begin{equation}\label{eq:comdia2}
\xymatrix{P^X_t \ar[dr]^{\alpha_t} \ar[rr]^\iota && P^X_{t+2\epsilon}\\  &  P^Y_{t+\epsilon}  \ar[ur]^{\beta_{t+\epsilon}}}
\qquad
\xymatrix{& P^X_{t+\epsilon} \ar[dr]_{\alpha_{t+\epsilon}}  \\  P^Y_{t}  \ar[rr]_\iota \ar[ur]^{\beta_t}  & & P^Y_{t+2\epsilon}}
\end{equation}

\begin{definition}
Two persistence modules $\P^X$ and $\P^Y$ are \emph{isomorphic} if they are $0$-interleaved.
\end{definition}

The diagrams in \eqref{eq:comdia1} and \eqref{eq:comdia2} are slightly different to those given in \cite{chazal2009proximity} but the diagrams here commuting will imply that theirs also commute. 


If $\P^X$ and $\P^Y$ are $\epsilon_1$-interleaved and $\P^Y$ and $\P^Z$ are $\epsilon_2$-interleaved then composing homomorphisms shows that $\P^X$ and $\P^Z$ are $(\epsilon_1+ \epsilon_2)$-interleaved. We can define a pseudo-distance on the space of persistence modules, called the \emph{interleaving distance}, where the interleaving distance between $\P^X$ and $\P^Y$ is the infimum of the set of $\epsilon>0$ such that $\P^X$ and $\P^Y$ are $\epsilon$-interleaved. It is worth noting that two persistence modules might have interleaving distance $0$ and yet not be $0$-interleaved (and thus not isomorphic).

%
%
%
%
%
\section{Existing stability results and Rips filtrations constructed from related symmetric functions}\label{sec:sym}

In this section we will recall the definition for the Rips filtration of a metric space and more generally for sublevel sets of a symmetric function $f:X\times X\to \R$. We will also recall the existing stability results for their persistent homology. Given a function $f:X\times X\to \R$ we construct a family of related symmetric functions $\sym_a(f)$  (for $a\in [0,1]$). We show that the persistent homology constructed from the $\sym_a(f)$ is stable as a corollary for the stability results for symmetric functions under the correspondence distortion distance.

\begin{definition}
Given a set $X$ and a symmetric function $f:X\times X\to \R$, the \emph{Rips filtration} of $(X,f)$ is a family of finite simplicial complexes $\Rips(X,f)=\{\Rips(X,f)_t\}_{t\geq 0}$ with $\Rips(X,f)_t$ the clique complex on the graph with vertices $X_t=\{x\in X: f(x,x)\leq t\}$ and edges $\{[x_1,x_2]\in X_t\times X_t: f(x_1, x_2)\leq t\}$. \footnote{Readers need to be warned that sometimes the Rips filtration is defined by adding the edge $[x_1, x_2]$ when $d_X(x_1, x_2)\leq t/2$ instead of $d_X(x_1, x_2)\leq t$, so sometimes results may differ from here by a corresponding  factor of $2$.}
\end{definition}


\begin{theorem}\label{the:Rips}
Let $f:X\times X \to \R$ and $g:Y\times Y\to \R$ be symmetric functions and $\Rips(X,f), \Rips(Y,g)$ their corresponding Rips filtrations. If $d_{CD}((X,f), (Y,g))$ is finite then for all $\epsilon>d_{CD}((X,f), (Y,g))$, the $k$th homology persistence modules of $\Rips(X,f)$ and $\Rips(Y,g)$ are $\epsilon$-interleaved. In particular, when $(X, d_Y)$ and $(Y, d_Y)$ are compact metric spaces then $\Rips(X,d_X)$ and $\Rips(Y,d_Y)$ are $\epsilon$-interleaved for all $\epsilon>2d_{GH}(X,Y)$.
\end{theorem}

Since the only condition required is symmetry of the filtration function, one approach for analysing general functions $f:X\times X \to \R$ is to construct related symmetric functions. We will consider a one-parameter family of possible symmetric filtrations.
We then prove stability for the Rips filtrations of these symmetric constructions in terms of the correspondence distortion distance between the original set-function pairs.

\begin{definition}\label{def:sym}
Let $(X,f)$ be a finite set $X=\{x_1, \ldots x_N\}$ equipped with function $f:X\times X\to \R$. For any $a\in [0,1]$ we can define a symmetric function 
\begin{align*}
	\sym_a(f):X \times X &\to \R\\
	(x,y)&\mapsto a\min\{f(x,y), f(y,x)\} + (1-a)\max\{f(x,y), f(y,x)\}
	\end{align*}
\end{definition}

Since $\sym_a(f)$ is symmetric we can construct its Rips filtration $\{\Rips(X, \sym_a(f))_t\}$ where $\Rips(X,\sym_a(f))_t$ is the simplicial complex containing $[x_0, x_1, \ldots x_p]$ whenever $sym_a(f)(x_i,x_j)\leq t$ for all $i,j$. We call this the \emph{Rips filtration under $\sym_a$}. If $f$ is a symmetric function then $\sym_a(f)=f$ for all $a$, which implies that the Rips filtration under $\sym_a$ generalises the symmetric Rips filtration.

As a corollary of the stability for symmetric functions we have stability for the symmetrised functions.

\begin{cor}\label{cor:symRips}
Fix $a\in [0,1]$ and a homology dimension $k$. Let $(X,f)$ and $(Y,g)$ be set-function pairs such that $d_{CD}((X,f),(Y,g))$ is finite. Let $P^X$ and $P^Y$ be the corresponding $k^{th}$ dimension homology persistence modules constructed from the corresponding Rips filtrations under $\sym_a $ ($\Rips(X,\sym_a(f))$ and $\Rips(Y,\sym_a(g))$ respectively). Then $d_{int}(P^X,P^Y)\leq 2d_{CD}((X,\sym_a(f)),(Y,\sym_a(g)))$.
\end{cor}

Unfortunately this method of constructing Rips filtrations is somewhat crude. We can show that in the process of symmetrising we dampen dissimilarities. This is not surprising as the space of symmetric functions is much smaller than that of functions generally.  In particular we will show in Theorem \ref{thm:symdampens} that $d_{CD}((X, \sym_a(f)),(Y, \sym_a(g)))\leq 2d_{CD}((X,f), (Y,g))$ for all $a\in [0,1]$. There are many examples where this inequality is strict. For asymmetric functions $d_{CD}((X, \sym_a(f)),(Y, \sym_a(g)))$ is often significantly smaller than $2d_{CD}((X,f), (Y,g))$. Suppose $X=Y$, $f:X\times X \to \R$  is an antisymmetric function and $g=-f$. Then by construction $\sym_a(f)=\sym_a(g)$ for all $a$ but for non-zero $f$, we generally have $d_{CD}((X,f),(X,-f))>0$.

The dampening process through symmetrisation is encapsulated in the following lemma.
\begin{lemma}\label{lem:sym}
Let $w,\hat{w},z, \hat{z}\in \R$. Then
\begin{enumerate}
\item[(i)] $|\max\{w,\hat{w}\}-\max\{z,\hat{z}\}|\leq \max\{|w-z|,|\hat{w}-\hat{z}|\}$
\item[(ii)] $|\min\{w,\hat{w}\}-\min\{z,\hat{z}\}|\leq \max\{|w-z|,|\hat{w}-\hat{z}|\}$
\end{enumerate}
\end{lemma}

\begin{proof}
We can prove (i) through a series of cases. 
If $w\leq \hat{w}$ and $z\leq \hat{z}$ then $|\max\{w,\hat{w}\}-\max\{z,\hat{z}\}|= |\hat{w}-\hat{z}|$.
If $w\geq \hat{w}$ and $z\geq \hat{z}$ then $|\max\{w,\hat{w}\}-\max\{z,\hat{z}\}|= |w-z|$.

If $w\leq \hat{w}$ and $z\geq \hat{z}$ then 
\begin{align*}
|\max\{w,\hat{w}\}-\max\{z,\hat{z}\}|&=|\hat{w}-z|\\
&\leq \begin{cases} |\hat{w}-\hat{z}| &\text{ if }\hat{z}\leq w\\
|w-z| &\text{ if } \hat{z}\geq w
\end{cases}\\
&\leq \max\{|w-z|,|\hat{w}-\hat{z}|\}
\end{align*}

Reversing the roles of the letters, we also see $|\max\{w,\hat{w}\}-\max\{z,\hat{z}\}|\leq \max\{|w-z|,|\hat{w}-\hat{z}|\}$ whenever $w\geq \hat{w}$ and $z\leq \hat{z}$

We can infer (ii) from (i) by replacing each of $w,\hat{w},z, \hat{z}$ by their negatives.
\end{proof}

\begin{theorem}\label{thm:symdampens}
Fix $a\in [0,1]$ and a homology dimension $k$. Let $(X,f)$ and $(Y,g)$ be set-function pairs. Then $d_{CD}((X, \sym_a(f)),(Y, \sym_a(g)))\leq 2d_{CD}((X,f), (Y,g))$.
\end{theorem}

\begin{proof}
It is sufficient to show that $dis_{(X,\sym_a(f)),(Y,\sym_a(g))}(\M) \leq dis_{(X,f),(Y,g)}(\M) $ for every correspondence $\M$. 

Fix some correspondence $\M \subset X \times Y$ and let $(x_1,y_1), (x_2, y_2) \in \M$. From Lemma \ref{lem:sym} (using $w=f(x_1, x_2), \hat{w}=f(x_2, x_1), z=g(y_1,y_2)$ and $\hat{z}=g(y_2,y_1)$) we know that both
\begin{align*}
|\min\{f(x_1,x_2), f(x_2,x_1)\}&-\min\{g(y_1,y_2), g(y_2,y_1)\}|\\
&\leq \max\{|f(x_1, x_2)-g(y_1, y_2)|, |f(x_2, x_1)-g(y_2, y_1)| \}
\end{align*}
and 
\begin{align*}
|\max\{f(x_1,x_2), f(x_2,x_1)\}&-\max\{g(y_1,y_2), g(y_2,y_1)\}|\\
&\leq \max\{|f(x_1, x_2)-g(y_1, y_2)|, |f(x_2, x_1)-g(y_2, y_1)| \}
\end{align*}
Taking a convex combination of these equations tells us that
\begin{align}\label{eq:pfsym}
|\sym_a(f)(x, \hat{x}) -\sym_a(g)(y, \hat{y})|\leq \max\{|f(x, \hat{x}) -g(y, \hat{y})|, |f(\hat{x}, x) -g(\hat{y}, y)|\}.
\end{align}
By taking the supremum on both sides over all pairs $\{(x,y), (\hat{x}, \hat{y})\} \in \M$ we see that 
$$dis_{(X,\sym_a(f)),(Y,\sym_a(g))}(\M) \leq dis_{(X,f),(Y,g)}(\M).$$
\end{proof}

\section{Persistent homology of OT-complexes}\label{sec:OT}

Ordered tuple complexes are an alternative to simplicial complexes. We will find them useful as they have more flexibility with regard to order; we can have asymmetric roles within the same tuple. 

 \begin{definition}
An \emph{ordered tuple} is a sequence of $(v_0, v_1, v_2, \ldots v_n)$ potentially including repeats. A \emph{ordered tuple complex} (shortened to \emph{OT-complex}) is a collection  $K$ of ordered tuples such that if $(v_0, v_1, v_2, \ldots v_n)\in K$ then $(v_0, \ldots, \hat{v_i}, \ldots v_n)\in K$ for all $i$ (where $(v_0, \ldots, \hat{v_i}, \ldots v_n)$ is the ordered tuple with $v_i$ removed).
\end{definition}

It is worth emphasising that each ordered tuple is determined by the ordered sequence and not just the underlying vertices; $(v_1, v_2, v_3)$ and $(v_3, v_1, v_2)$ are distinct and not even linearly related.

The ideas of homology and persistent homology naturally extend to OT-complexes. Thoughout $\F$ will be a fixed field.

\begin{definition}
Given an OT-complex $K$ we can build a chain complex $C_*(K)$ where $C_p(K)$ is the set of all the $\F$-linear combinations of the ordered tuples in $K$ with length $p+1$.  This is an $\F$-vector space whose basis vectors are the ordered tuples in $K$ of length $p+1$. We define a boundary map $\partial_{p}: C_{p}(K) \to C_{p-1}(K)$ by 
$$\partial_p((v_0, v_1, v_2, \ldots v_p))=\sum_{i=0}^k(-1)^{i}(v_0, \ldots, \hat{v_i}, \ldots v_p)$$
 and extending linearly. We define the $k$-th homology group of OT-complex $K$ as $H_k(K) = \ker (\partial_{k-1})/ \im(\partial_{k}).$ 
 \end{definition}
 
When $K_1\subset K_2$ are both OT-complexes then the inclusion of chains induces a map on their homology groups: $\iota_*:H_*(K_1)\to H_*(K_2)$.
 
\begin{definition} 
We say $\mathcal{K}=\{K_t\}$ is filtration of OT-complexes if $K_t \subset K_r$ whenever $t\leq r$. We define the \emph{$k$th dimensional ordered tuple persistence module} corresponding to $\mathcal{K}$  as follows:
\begin{itemize}
\item for each $t$ set the vector space $V_t=H_k(K)$ computed over $\F$ 
\item for each pair $s\leq t$ we have a linear map induced from inclusion $$\iota_{t\to s}:H_*(K_s) \to H_*(K_t).$$
\end{itemize}
\end{definition}
It is easy to check that this does satisfy the requirements of a persistence module.

We can define the directed Rips filtration as a filtration of OT-complexes where the condition for when a ordered tuple is included is dependent on the
order in which the points in the tuple appear. From this filtration of OT-complexes we can construct directed Rips persistence modules. 

\begin{definition}\label{def:OT-flag}
Let $(X,f)$ be a set-function pair. Set  $\{\Rips^{dir}(X,f)_t\}$ to be the  filtration of OT-complexes where  $(v_0, v_1, \ldots v_p)\in \Rips^{dir}(X,f)_t$ when $f(v_i, v_j)\leq t$ for all $i\leq j$. We call  $\{\Rips^{dir}(X,f)_t\}$ the \emph{directed Rips filtration} of $(X,f)$. 
For each dimension $k$, we will define the \emph{$k^{th}$ dimensional directed Rips persistence module} as the $k^{th}$ dimensional ordered tuple persistence module of $\{\Rips^{dir}(X,f)_t\}$.
\end{definition}

We claim that these directed Rips persistence modules are a generalisation of the Rips persistence modules constructed from symmetric functions. To do this we need to recall some classical relationships between the homology of OT-complexes and simplicial complexes. Indeed, a common first example of an OT-complex is via a simplicial complex. For a simplicial complex $K$ there is an OT-complex $K^{OT}$ where $(v_0, v_1, \ldots v_p)\in K^{OT}$ whenever $[v_0, v_1 \ldots v_p]$, after removing any repeats, is a simplex in $K$. In \cite{munkres1984elements}, Munkres calls the chain complex $C_*(K^{OT})$ the \emph{ordered chain complex of $K$}, and shows that the simplical homology of $K$ and the OT-complex homology of $K^{OT}$ are isomorphic. This isomorphism result holds also for persistence modules of filtrations of simplicial complexes as the isomorphisms on homology groups commute with the induced maps on homology by inclusions. This implies that if $f:X\times X\to \R$ is a symmetric function then the $k^{th}$ dimensional
 ordered tuple persistence module of $\{\Rips^{dir}(X,f)_t\}$ is isomorphic to the  $k^{th}$ dimensional persistence module of $\{\Rips(X,f)_t\}$.
 
 \subsection{Stability of the directed Rips persistence modules}

We will want to prove that the directed Rips persistence modules enjoy stability with respect to the correspondence distortion distance. To do this we will compare set-function pairs over different sets via their induced set-function pairs over a common set constructed via a fixed correspondence.

Given functions $f:X\times X \to \R$, and $g:Y\times Y \to \R$, along with a correspondence $\M \subset X\times Y$ we can pull back the functions $f$ and $g$ to corresponding functions on $\M\times \M$ via the projection maps;
\begin{align*}
f^{\M}:\M \times \M & \to \R\\
(x_1,y_1) \times (x_2, y_2)&\mapsto f(x_1, x_2).
\end{align*}
and
\begin{align*}
g^{\M}:\M \times \M & \to \R\\
(x_1,y_1) \times (x_2, y_2)&\mapsto g(y_1, y_2).
\end{align*}

The proof of the following lemma follows directly from the definitions of $f^{\M}$ and $g^{\M}$.

\begin{lemma}\label{lem:infdist}
Let $(X,f),(Y,g)$ be set-function pairs and $\M\subset X\times Y$ a correspondence. Then
$$\|f^{\M}-g^{\M}\|_\infty = 2dis_{(X,f), (Y,g)}(\mathcal{M})$$
\end{lemma}

We will also need to prove that the directed Rips persistence modules over $(X,f)$ and $(\M, f^\M)$ are isomorphic. To do this we will introduce the notion of the expansion of an OT-complex.

\begin{definition}
Let $K$ be an OT-complex. We say that $K$ is \emph{closed under adjacent repeats} if whenever $(v_0,v_1, \ldots v_p)\in C_p(K)$ then $(v_0, \ldots v_i, v_i, \ldots v_p)\in C_{p+1}(K)$ for all $i=0,1,\ldots p$.
 \end{definition}
 
It is worth observing that by construction $\{\Rips^{dir}(X,f)_t\}$ is closed under adjacent repeats for any set-function pair $(X,f)$.

\begin{definition}
Let $K$ and $\tilde{K}$ be OT-complexes, both closed under adjacent repeats, over vertex sets $V$ and $\tilde{V}$ respectively. We say that $\tilde{K}$ is an \emph{expansion} of $K$ if there exists a surjective map $\pi:\tilde{V} \to V$ and injective map $\iota:V\to \tilde{V}$ such that $\pi\circ \iota=\id_V$ and $(v_0, v_1, \ldots ,v_p)\in \tilde{K}$ if and only if  $(\pi(v_0), \pi(v_1), \ldots ,\pi(v_p))\in K$.

Let $\mathcal{K}=\{K_t\}$ and $\tilde{\mathcal{K}}=\{\tilde{K}_t\}$ be filtrations of OT-complexes over vertex sets $V$ and $\tilde{V}$ respectively. We say that $\tilde{\mathcal{K}}$ is an \emph{expansion} of $\mathcal{K}$ if there exists a surjective map $\pi:\tilde{V} \to V$  and injective map $\iota:V\to \tilde{V}$ such that $\pi\circ \iota=\id_V$ and, for all $t$,  $(v_0, v_1, \ldots, v_p)\in \tilde{K}_t$ if and only if  $(\pi(v_0), \pi(v_1), \ldots , \pi(v_p))\in K_t$.
\end{definition}

\begin{proposition}\label{prop:expansion}
If $\mathcal{K}=\{K_t\}$ and $\tilde{\mathcal{K}}=\{\tilde{K}_t\}$ are filtrations of OT-complexes such that $\tilde{\mathcal{K}}$ is an expansion of $\mathcal{K}$ then the OT persistence modules of $\mathcal{K}$ and $\tilde{\mathcal{K}}$ are isomorphic.
\end{proposition}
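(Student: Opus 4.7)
My plan is to show that $\mathcal{K}$ and $\tilde{\mathcal{K}}$ have chain-level equivalences that respect the filtration, giving not just an isomorphism of persistence modules but a $0$-interleaving. First, the projection $\pi\colon \tilde V\to V$ induces, entry-wise, a linear map $\pi_\#\colon C_*(\tilde K_t)\to C_*(K_t)$ sending $(v_0,\ldots,v_p)$ to $(\pi(v_0),\ldots,\pi(v_p))$; this is well-defined by the expansion property and is automatically a chain map (it commutes with the alternating-deletion boundary) and respects the inclusions $\tilde K_t\subset \tilde K_{t'}$ and $K_t\subset K_{t'}$. Choose any set-theoretic section $s\colon V\to \tilde V$ of $\pi$; applied entry-wise it gives $s_\#\colon C_*(K_t)\to C_*(\tilde K_t)$, which is again a filtration-compatible chain map (using the expansion property in the other direction), and by construction $\pi_\# \circ s_\# = \mathrm{id}$ on $C_*(K_t)$.

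The heart of the argument is to produce a filtration-compatible chain homotopy $h_t\colon C_p(\tilde K_t)\to C_{p+1}(\tilde K_t)$ with $\partial h_t + h_t \partial = s_\#\pi_\# - \mathrm{id}$. For $\sigma=(v_0,\ldots,v_p)\in \tilde K_t$ with $\tilde v_i := s(\pi(v_i))$ I will use the standard prism formula
\[
h_t(\sigma) \;=\; \sum_{i=0}^{p} (-1)^i (v_0,\ldots,v_i,\tilde v_i,\tilde v_{i+1},\ldots,\tilde v_p).
\]
This is the step where the \emph{closed under adjacent repeats} hypothesis becomes essential: projecting a summand by $\pi$ gives $(\pi(v_0),\ldots,\pi(v_i),\pi(v_i),\pi(v_{i+1}),\ldots,\pi(v_p))$, which has an adjacent repeat at positions $i,i{+}1$, so it lies in $K_t$ precisely because $K_t$ is closed under adjacent repeats; the expansion property then places the original tuple in $\tilde K_t$. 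Verifying $\partial h_t+h_t\partial = s_\#\pi_\# -\mathrm{id}$ is the classical telescoping computation for the prism operator, in which interior faces cancel in pairs and only the top and bottom endcap terms survive; I will carry this out essentially verbatim from the simplicial case.

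Finally I put the pieces together. Since $h_t$ is defined term-wise and the formula does not depend on $t$, the chain homotopies are compatible with the inclusions $\tilde K_t\hookrightarrow \tilde K_{t'}$; similarly $\pi_\#$ and $s_\#$ commute with inclusions in both filtrations. Passing to $k$th homology, $[\pi_\#]$ and $[s_\#]$ therefore descend to linear maps $\alpha_t\colon H_k(\tilde K_t)\to H_k(K_t)$ and $\beta_t\colon H_k(K_t)\to H_k(\tilde K_t)$ that commute with all the structure maps of the two persistence modules, and satisfy $\alpha_t\beta_t=\mathrm{id}$ (from $\pi_\# s_\#=\mathrm{id}$) and $\beta_t\alpha_t=\mathrm{id}$ (from the chain homotopy). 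These $\alpha_t$ and $\beta_t$ are an $\epsilon=0$ interleaving, so by the earlier definition the two persistence modules are isomorphic.

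The main obstacle is neither the existence of $\pi_\#, s_\#$ nor the bookkeeping for filtrations, but checking carefully that every summand of $h_t$ actually lies in $\tilde K_t$; this is exactly where closure under adjacent repeats is used, and it is the one place the argument fails if that hypothesis is dropped.
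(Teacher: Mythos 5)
Your proposal is correct and follows essentially the same route as the paper: a chain map $\pi_\#$ downward, a chain map upward (the paper uses the inclusion $K_t\subset\tilde K_t$, which is just the particular section of $\pi$ given by $V\subset\tilde V$; you allow an arbitrary section), the identity $\pi_\#\circ s_\#=\mathrm{id}$ on the nose, and the prism operator giving a chain homotopy from $s_\#\circ\pi_\#$ to the identity, with closure under adjacent repeats used exactly where you say to check the prism summands lie in $\tilde K_t$. Nothing further is needed.
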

\begin{proof}
Without loss of generality we can relabel the points in $V$ to consider it as a subset of $\tilde{V}$ (relabelling $v\in V$ as $\iota(v)\in \tilde{V}$). In this case $\iota$ is the inclusion map and $\pi$ is a projection map.

Both $\pi:\tilde{K}_t \to K_t$ and $\iota:K_t\to \tilde{K}_t$ induce chain maps $\pi_\#: C_*(\tilde{K}_t) \to C_*(K_t)$ and $\iota_\#: C_*(K_t) \to C_*(\tilde{K}_t)$. Observe that  $\pi_\#\circ \iota_\#=\id:C_*(K_t)\to C_*(K_t)$, so $\pi_*\circ \iota_*=\id:H_*(K_t) \to H_*(K_t)$ for all $t$.

Suppose $(v_0, v_1, \ldots, v_i, \ldots, v_p) \in C_p(\tilde{K}_t)$. To construct a prism operator later we want to show that  $$(v_0, v_1, \ldots , v_i, \pi(v_i), \ldots , \pi(v_p))\in C_{p+1}(\tilde{K}_t).$$ To do this we use that $\tilde{K}_t$ is closed under adjacent repeats, the definition of expansions (twice), and the property that $\pi$ is a projection (so $\pi(\pi(v_j))=\pi(v_j))$.
\begin{align*}
(v_0, v_1, \ldots, v_i, \ldots, v_p) \in C_p(\tilde{K}_t) &\implies (v_0, v_1, \ldots, v_i, v_i, \ldots, v_p)\in C_{p+1}(\tilde{K})\\
&\implies  (\pi(v_0), \pi(v_1), \ldots, \pi(v_i), \pi(v_i), \ldots, \pi(v_p))\in C_{p+1}(K_t)\\
&\implies (\pi(v_0), \pi(v_1), \ldots, \pi(v_i), \pi(\pi(v_i)), \ldots, \pi(\pi(v_p)))\in C_{p+1}(K_t)\\
&\implies (v_0, v_1, \ldots , v_i, \pi(v_i), \ldots , \pi(v_p))\in C_{p+1}(\tilde{K}_t)
\end{align*}


Consider the prism operator
$$P((v_0, v_1, \ldots v_p))=\sum_{i=0}^p (-1)^{i} ((v_0, v_1, \ldots v_i, \pi(v_i), \pi(v_{i+1}), \ldots, \pi(v_p)).$$
Routine algebra shows that $\partial P + P\partial = i_\# \circ \pi_\#-\id$ and thus $i_\# \circ \pi_\#$ is chain homotopic to the identity. This implies $i_*\circ \pi_*:H_*(\tilde{K}_t) \to H_*(\tilde{K}_t)$ is the identity function.

The chain maps $\pi_\#$ and $i_\#$ commute with the inclusion maps for the filtrations of OT-complexes and hence 
%
%
the following diagrams commute

\begin{minipage}{0.4\linewidth}\centering
$$\xymatrix{
H_*(\tilde{K}_s)  \ar[d]^{\pi_*} \ar[r]^{\iota_*} & H_*(\tilde{K}_t) \ar[d]^{\pi_*}\\
H_*(K_s)  \ar[r]^{\iota_*}          &H_*(K_t) }$$
\end{minipage}
\begin{minipage}{0.4\linewidth}\centering
$$\xymatrix{
H_*(K_s)  \ar[d]^{i_*} \ar[r]^{\iota_*} & H_*(K_t) \ar[d]^{i_*}\\
H_*(\tilde{K}_s)  \ar[r]^{\iota_*}          &H_*(\tilde{K}_t) }$$
\end{minipage}

Since $i_*\circ \pi_*=\id:H_*(\tilde{K}_t) \to H_*(\tilde{K}_t)$ and  $\pi_*\circ i_*=\id:H_*(K_t) \to H_*(K_t)$ for all $t$ we see that
 $\mathcal{K}$ and $\tilde{\mathcal{K}}$ are isomorphic.
 \end{proof}

\begin{theorem}\label{thmDirRips}
Let $(X,f)$ and $(Y,g)$ be set function pairs such that $d_{CD}((X,f),(Y,g))$ is finite. Let $P^X$ and $P^Y$ be the corresponding $k$th dimension homology persistence modules constructed from the corresponding directed Rips filtrations $\{\Rips^{dir}(X,f)_t\}$ and $\{\Rips^{dir}(Y,g)_t\}$. $d_{int}(P^X, P^Y)\leq 2d_{CD}((X,f),(Y,g))$.
\end{theorem}
\begin{proof}
Since $d_{CD}((X,f),(Y,g)) $ is finite there exists some correspondence $\M\subset X\times Y$ with $dis_{(X,f), (Y,g)}(\M)$ finite. Fix a correspondence $\M\subset X\times Y$ with $dis_{(X,f), (Y,g)}(\M)$ finite. From this correspondence construct directed Rips filtrations $\{\Rips^{dir}(\M,f^\M)_t\}$ and $\{\Rips^{dir}(\M,g^\M)_t\}$ with corresponding  $k$-th dimensional persistence modules $P^{(X,\M)}$ and $P^{(Y,\M)}$.

By construction $\{\Rips^{dir}(\M,f^{\M})_t\}$ is an expansion of $\{\Rips^{dir}(X,f)_t\}$ and thus by Proposition \ref{prop:expansion} we know that the persistence modules $P^X$ and $P^{(X,\M)}$ are isomorphic. Similarly we can also show that  $P^Y$ and $P^{(Y,\M)}$ are isomorphic.

By Lemma \ref{lem:infdist} we know $\|f^{\M}-g^{\M}\|_\infty \leq 2dis_{(X,f), (Y,g)}(\M)$. There is an inclusion $$\Rips^{dir}(\M,f^\M)_t\subset \Rips^{dir}(\M,g^\M)_{t+2dis_{(X,f), (Y,g)}(\M) }$$ for all $t$ as
\begin{align*}
(v_0, v_1, \ldots v_n)&\in \Rips^{dir}(\M,f^\M)_t\\
&\implies f^\M(v_i, v_j)\leq t \text{ for all }i\leq j\\
&\implies g^{\M}(v_i, v_j)\leq t +dis_{(X,f), (Y,g)}(\M)  \text{ for all }i\leq j\\
&\implies (v_0, v_1, \ldots v_n)\in \Rips^{dir}(\M,g^\M)_{t+2dis_{(X,f), (Y,g)}(\M)}.
\end{align*}

Symmetrically there are also inclusions $\Rips^{dir}(\M,g^\M)_t\subset \Rips^{dir}(\M,f^\M)_{t+2dis_{(X,f), (Y,g)}(\M) }$ for all $t$. These inclusion maps induce a $2dis_{(X,f), (Y,g)}(\M)$ interleaving between $P^{(X,\M)}$ and $P^{(Y,\M)}$. This implies that $P^{X}$ and $P^{Y}$ are  $2dis_{(X,f), (Y,g)}(\M)$ interleaved.

By considering the infimum of the interleavings constructed by correspondences we see that $P^X$ and $P^Y$ is at most $2d_{CD}((X,f),(Y,g))$.
\end{proof}

\subsection{Comparison to ordered-set persistent homology}\label{sec:OS}

It is  possible to construct homology groups and persistence modules using ordered-sets instead of order-tuples. As a preemptive attempt to reduce confusion, this section will compare this ordered-tuple persistent homology to ordered-set persistent homology. 
In ordered-set homology we effectively restrict our chains to only contain ordered tuples were there are no repeats. We can still define homology, persistent homology and persistence modules. Furthermore, in some applications this may better reflect the connectivity structure  (such as in the analysis of the blue brain project in \cite{BBP}) but there are two important reasons why we are not considering order set persistence modules as a generalisation of the Rips persistence modules. The first reason is that when we restrict to symmetric functions we do not get isomorphic persistence modules to the standard Rips persistence modules. The second reason is that these persistence modules are not stable with respect to the correspondence distortion distance.

For example, consider the set $X=\{x,y\}$ with the $f$ the zero function. For $t<0$ then the corresponding ordered sets complexes are empty with  trivial homology. The ordered tuple complexes and Rips simplicial complexes are also empty and have trivial homology. For $t\geq0$, the corresponding ordered set complex consists of the ordered sets $(x)$, $(y)$, $(x,y)$ and $(y,x)$. It has non-trivial $1$-dimensional homology. To see this first observe that $(x,y)+(y,x)$ is a cycle but the space of $2$-chains is trivial so there are no non-trivial $1$-chain boundaries. 
In comparison, the Rips simplicial complex is $[x,y]$ which has no non-trivial $1$-cycles. The ordered-tuple complex is more complicated but everything ends up canceling each other. For example, this cycle of concern in the ordered set homology ($(x,y)+(y,x)$) is a boundary in the setting of OT-homology;  $(x,y)+(y,x)=\partial((x,y,x)+(x,x,x))$.

To see that the ordered set persistence modules are not stable with respect to the correspondence distortion distance compare $(X,f)$ in the example in the paragraph above to single point space $Z=\{z\}$ with function $g(z)=0$. The first dimensional ordered set homology for $Z$ is also trivial and so its first dimensional persistence module is also trivial. The correspondence $\{(x,z),(y,z)\}\subset X\times Z$ has zero distortion but the ordered set persistence modules are not $\epsilon$-interleaved for any $\epsilon$.

\section{Persistence modules via strongly connected components and preorder homology}
In this section we will consider constructions using an associated filtration of directed graphs or preorders. For each $t$ we can define a directed graph $\{D(X)_t\}$ where $x\to y$ is included in $D(X)_t$ when $\max\{f(x,y),f(x,x),f(y,y)\}\leq t$. From a directed graph we can induce a natural preorder via the existence of paths. That is a preorder where $x\leq y$ if there is a path from $x$ to $y$ in $D(X)_t$. We will construct persistence modules using the strongly connected components of the graphs $D(X)_t$, which are also the equivalence classes of the associated preorders. We also consider persistence modules using ordered-tuple complexes constructed over preorders.

Let us first introduce the construction of directed graphs and preorders from set-function pairs. 

\begin{definition}
Let $X$ be a set with a binary relationship $\leq$. Consider the following potential properties of $(X,\leq)$:
\begin{itemize}
\item[(i)] $x\leq x$ for all $x\in X$ (reflexive)
\item[(ii)] for all $x,y\in X$, if $x\leq y$ and $y\leq x$ then $x=y$ (antisymmetric)
\item[(iii)] for all $x,y,z\in X$, if $x\leq y$ and $y\leq z$ then $x\leq z$ (transitive)
\end{itemize}
We say that $(X,\leq)$ is a \emph{poset} is it satisfies $(i)$, $(ii)$ and $(iii)$. We say $(X,\leq)$ is a \emph{preorder} if it satisfies $(i)$ and $(iii)$.
\end{definition}

There is a natural equivalence relation on $X$ where $x\sim y$ when $x\leq y$ and $y\leq x$. If we quotient a preorder by this equivalence relation we are left with a poset.

One way to construct preorders is via directed graphs. Given a directed graph $G=(V,E)$ and vertices $x,y\in V$ we say there is a \emph{path} from $x$ to $y$ when there is a finite sequence of vertices $x_0, x_1, \ldots x_n=y$  such that $(x_i, x_{i+1})$ is a directed edge. To create a preorder on $V$ we declare that $x\leq y$ whenever there is a path from $x$ to $y$.  The strongly connected components of a directed graph are the equivalence classes of points where $v\sim w$ when there exists both a path from $v$ to $w$ and a path from $w$ to $v$.  Thus we see that the equivalence classes of this poset are precisely the strongly connected components of the directed graph it was built from. Suppose we start with a directed graph and we consider the preorder defined by the existence of paths. If we quotient by the equivalence relation to get a poset, then on the directed graph level we are collecting the vertices into the strongly connected components and then we have directed edges between these strongly connected components if there is a path between them. This will create an acyclic directed graph. 

We will first need to construct directed graphs from the sub-level sets of a set-function pair. From this we can consider filtrations of directed graphs and of preorders.

\begin{definition}\label{def:associated directed graphs}
Given a set-function pair $(X,f)$ there is a natural filtration of directed graphs $\{\D(X)_t:t\in [0,\infty)\}$ associated to $X$ by setting $\D(X,f)_t$ to the the directed graph with vertices $\{x\in X: f(x,x)\leq t\}$ and including the directed edge $x\to y$ whenever $\max\{f(x,x), f(y,y), f(x,y)\}\leq t$. We will call this the \emph{associated filtration of directed graphs} of $(X,f)$.
\end{definition}

It is necessary for the inclusion rule for the directed edges to occur at the maximum of $\{f(x,x), f(y,y), f(x,y)\}$ (rather than at $ f(x,y)$ which may occur earlier) to ensure that $D(X,f)_t$ will satisfy the closure conditions for a directed graph. In the case where $f=d$ a quasi metric then $d(x,x)=0=d(y,y)$ and $d(x,y)\geq 0$ and so the edge from $x$ to $y$ is included at $t=d(x,y)$.

We define a filtration of preorders is parameterised family of preorders $\{(X_t, \leq_t): t\in \R\}$ such that for all $s\leq t$ we have $X_s\subset X_t$ and if $x,y\in X_s$ with $x\leq_s y$ then $x\leq_t y$. From a filtration of associated graphs for a set-function pair we can construct a natural filtration of preordered spaces as follows. 

\begin{definition}\label{def:posetOT}
Let $(X,f)$ be a set-function pair and let $\{D(X,f)_t\}$ be its associated filtration of directed graphs. For each $t\geq 0$  construct a preordered space $(X_t, \leq_t)$ with $X_t$ the set of points in $D(X,f)_t$ and $x\leq _t y$ when there exists a path in $D(X,f)_t$ from $x$ to $y$. We call this the \emph{associated filtration of preorders}.
\end{definition}

The following is a useful lemma for proving the interleaving results for the persistence modules constructed with strongly connected components or with preorder homology.

\begin{lemma}\label{lem:path}
	Let $X$ and $Y$ be sets and $(X,f)$ and $(Y,g)$ be set-function pairs with $d_{CD}((X,f),(Y,g)) $ finite. Let $\D(X,f)=\{D(X,f)_t\}$ and $\D(Y,g)=\{D(Y,g)_t\}$ be the associated filtrations of directed graphs. Let $\M\subset X\times Y$ be a correspondence with $dis_{(X,f),(Y,g)}(\M)$ finite
\begin{enumerate}[(i)]
\item If $(x,y)\in \M$ and  $x\in D(X,f)_t$ then $y\in D(Y,g)_{t+dis(\M)}.$
\item If $(x_1, x_2), (y_1, y_2)\in \M$ and there exists a directed path from $x_1$ to $x_2$ in $D(X,f)_t$ then there exists a directed path from $y_1$ to $y_2$ in $D(Y,g)_{t+dis(\M)}$.
\end{enumerate}
\end{lemma}

\begin{proof}
\begin{enumerate}[(i)]
\item If $x\in D(X,f)_t$ then $f(x,x)\leq t$. Since $(x,y)\in \M$ we know $g(y,y) \leq t+dis(\M)$ and hence $y\in D(Y,g)_{t+dis(\M)}.$
\item Suppose that there is a path from $x_1$ to $x_2$ in $D(X,f)_t$. This means that there exists a sequence of points $(x_1=a_1, a_2, \ldots, a_k=x_2)$ in $X$ such that $f(a_i, a_{i+1})\leq t$. There exists a sequence of points in $Y$, $y_1=b_1,b_2, \ldots, b_k=y_2$ where $(a_i,b_i)\in \M$.  By (i). we know each of the $b_i$ lie in $D(Y,g)_{t+dis_{(X,f),(Y,g)}(\M)}$. Since each $(a_i, b_i)\in \M$ we have 
$$|f(a_i, a_{i+1}) - g(b_i, b_{i+1})|\leq dis_{(X,f),(Y,g)}(\M)$$ for each $i$ and hence $(y_1=b_1, b_2,\ldots b_k=y_2)$ is a path in $D(Y,g)_{t+dis_{(X,f),(Y,g)}(\M)}$.
\end{enumerate}
\end{proof}

The lemma can be rewritten in terms of preorders; for $(x_1, y_1), (x_2, y_2)\in \M$ if $x_1\leq_t^f x_2$ then $y_1\leq_{t+dis_{(X,f),(Y,g)}(\M)}^g y_2$.

\subsection{Strongly connected components persistence}

%
%

Dimension $0$ persistent homology is all about tracking the evolution of connected components. For directed graphs, unlike graphs, there is choice in how to interpret what a connected component is, and each interpretation providing their own corresponding persistence module. Here we will consider the persistence of weakly and strongly connected components. 

Weakly connected components are the components of the graph when the directions are forgotten. Given a filtration of a directed graph by edge weights, the weakly connected persistence would be the same as the dimension $0$ persistent homology of the Rips filtration under $\sym_1$ in section \ref{sec:sym}, and to the dimension $0$ directed Rips persistence module in section \ref{sec:OT}. 

Studying strongly connected components  will provide new information. Recall the strongly connected components of a directed graph are the equivalence classes of points where $v\sim w$ when there exists both a path from $v$ to $w$ and a path from $w$ to $v$. 
Given a filtration of directed graphs we can construct a persistence module based on linear combinations of strongly connected components (analogous to dimension $0$ homology being interpreted as the space of formal linear combinations of connected components).

\begin{definition}
We call $\D=\{D_t: t\in\R \}$ a filtration of directed graphs if $D_t$ is directed graph for all $t$ such that if $s\leq t$ then $D_s$ is a directed subgraph of $D_t$.  Given a filtration of directed graphs $\D=\{D_t\}$, let $[v]_t$ denote the strongly connected component of $D_t$ containing $v$. We define the \emph{strongly connected persistence module} corresponding to $\D$  as follows:
\begin{itemize}
\item for each $t\in \R$ set the vector space $V_t$ to be the vector space of finite linear combinations of strongly connected component (elements are of the form $\sum_{i=1}^k \lambda_i [v_i]_t$ with $\lambda_i \in \F$)
\item for each pair $t\leq s$ we have a linear map induced from inclusion $\iota_{t\to s}( \sum_{i=1}^k \lambda_i [v_i]_t)= \sum_{i=1}^k \lambda_i [v_i]_s$
\end{itemize}
\end{definition}

We will now check that the strongly connected component persistence module does satisfy the requirements of a persistence module. Whenever we have an inclusion of directed graphs $D_t \subset D_s$, whenever there is a path from  $v$ to $w$ in $D_t$ then there is also a path from  $v$ to $w$ in $D_s$. This implies that the maps $\iota_{t\to s}$ are well defined.
Furthermore for $u\leq t\leq s$ we have $\iota_{t\to s}(\iota_{u \to t}( \sum_{i=1}^k \lambda_i [v_i]_u))= \sum_{i=1}^k \lambda_i [v_i]_s=\iota_{u\to s}( \sum_{i=1}^k \lambda_i [v_i]_u)$. 
Whenever the directed graphs $D_t$ are all finite (which is true in almost any application) we automatically know that the $V_t$ are all finite dimensional and hence the strongly connected persistence module is tame.

We can create strongly connected persistence modules from set-function pairs via its associated filtration of directed graphs.

\begin{theorem}\label{theorem:scc}
Let $X$ and $Y$ be sets and $(X,f)$ and $(Y,g)$ be set-function pairs with $d_{CD}((X,f),(Y,g)) $ finite. Let $\D(X,f)=\{D(X,f)_t\}$ and $\D(Y,g)=\{D(Y,g)_t\}$ be the associated filtrations of directed graphs. Let $\P^X$ and $\P^Y$ be the strongly connected component persistence modules for $\D(X,f)$ and $\D(Y,g)$. Then $d_{int}(\P^X,\P^Y)\leq d_{CD}((X,f),(Y,g)) $.
\end{theorem}

\begin{proof}
Fix a correspondence $\M\subset X\times Y$ with $dis_{(X,f),(Y,g)}(\M)$ finite.

Construct a map $\alpha: X \to Y$ where for each $x$ we arbitrarily fix a representative from $\{y\in Y: (x,y)\in \M\}$ and construct a map $\beta: Y \to X$ where for each $y$ we arbitrarily fix a representative from $\{x\in X: (x,y)\in \M\}$. 

If $[x_1]_t=[x_2]_t$ then there exist paths in $D(X,f)_t$ from $x_1$ to $x_2$ and from $x_2$ to $x_1$. By Lemma \ref{lem:path} there exist paths in $D(Y,g)_{t+dis_{(X,f),(Y,g)}(\M)}$ from $\alpha(x_1)$ to $\alpha(x_2)$ and from $\alpha(x_2)$ to $\alpha(x_1)$. This means that $\alpha$ induces a well-defined linear map 
\begin{align*}
\alpha_*:P^X_t& \to P^Y_{t+dis_{(X,f),(Y,g)}(\M)}\\
[x]_t &\mapsto [\alpha(x)]_{t+2dis_{(X,f),(Y,g)}(\M)}.
\end{align*}
Similarly $\beta$ induces a linear map $\beta_*:P^Y_t \to P^X_{t+2dis_{(X,f),(Y,g)}(\M)}$ where $[y]_t \mapsto [\beta(y)]_{t+2dis_{(X,f),(Y,g)}(\M)}$. 

It only remains to show that $\alpha_*$ and $\beta_*$ satisfy an $2dis_{(X,f),(Y,g)}(\M)$ interleaving. That \eqref{eq:comdia1} commutes follows directly from the construction of $\alpha$ and $\beta$.


Let $f(x,x)=t$ and hence $x\in D(X,f)_t$. From our construction of $\alpha$ and $\beta$ we know that $(x, \alpha(x))$ and $(\beta(\alpha(x)), \alpha(x))$ are both in $\M$. By Lemma \ref{lem:path} this implies that there are directed paths in both directions between $\beta(\alpha(x))$ and $x$ in the directed graph  $D(X,f)_{t+2dis_{(X,f),(Y,g)}(\M)}$; and hence they lie in the same strongly connected component.


Similarly, for every $y\in Y$ with $g(y,y)=t$, we know that $\alpha(\beta(y))$ and $y$ lie in the same strongly connected component in $D(Y,g)_{t+2dis_{(X,f),(Y,g)}(\M)}$. This ensures that we satisfy  \eqref{eq:comdia2}.

By taking the infimum over all correspondences we see that the interleaving distance between $P^X$ and $P^Y$ is bounded above by $2d_{CD}((X,f),(Y,g))$.
\end{proof}

We provide some pseudocode (Algorithm 1 in the appendix) for an algorithm that computes the interval decomposition of the strongly connected component persistence module from a filtration of directed graphs. It is a modification of the union-find algorithm used to compute the standard dimension $0$ persistent homology.  In the union-find algorithm each connected component is represented by a root vertex with an additional data of its birth time. The main difference for strongly connected components is that we have to also keep track of when directed paths exist between the various strongly connected components. These are stored as a list the root vertex of ``in'' and ``out'' connected components. Here ``in'' means a connected component that there is a path pointing into the current component and ``out'' means there is a path pointing out of the current component . Note that for any root vertex these in and out sets are disjoint, as being in both would imply they are the same strongly connected component. The main challenge in this modification is to ensure that at each stage the list of ``in'' and ``out'' strongly connected components listed by the root vertices are referred to by their root vertex.

%
%
%

%
%
\subsection{OT-complexes constructed using the preorder structure}\label{sec:poset}

In the theory of partially ordered sets (``posets''), the order complex of a poset is the set of all finite chains. Its homology contains important information about the poset. Preorders are a generalisation of posets where we drop the antisymmetry condition. Poset homology naturally extends to preorders, where we will call it preorder homology. It is easier and more flexible to construct filtrations of preorders than of posets.

From the associated filtration of directed graphs of a set-function pair we can create a filtration of preorders which we will call the preorder Rips filtration. From the filtration of preorders we can construct persistence modules using preorder homology to generate preorder Rips persistence modules. These persistence modules enjoys stability with respect to the correspondence distortion distance. The homology dimension $0$ preorder Rips persistence module is isomorphic to that of its weakly connected components, its directed Rips persistence module and the standard Rips persistence module under $\sym_1$. If the input is a symmetric function then its higher dimensional preorder Rips persistence modules are all trivial, showing that preorder Rips persistence module describes asymmetry information.

In this paper we will generalise to preorders some constructions normally defined for posets. The homology of a poset has been defined and studied via its corresponding Alexandrov topology. Preorders are in bijective correspondence with Alexandrov topologies, with the antisymmetry condition (which is the axiom that makes a preorder a poset) translating to those topologies that are $T_0$. For each preorder there is a canonical poset over its equivalence classes, and this poset corresponds to the Kolmogorov quotient of the Alexandrov topology of that original preorder. Because these quotient spaces are weakly homotopy equivalent, standard references for Alexandrov topology often state they will restrict their analysis to  $T_0$ spaces/ posets (e.g. \cite{McCord, May}) . It is for this reason that definitions are usually only stated for posets and not more generally for preorders. In the appendix we will go into more detail into this background material and justify why the definitions given in this section are the natural generalisation of those traditionally given for posets.


Let us now construct a OT-complex from a preorder. 
\begin{definition}
Given a preorder $(X, \leq)$, let $\O(X, \leq)$ be the OT-complex containing $(x_0, x_1, \ldots x_p)$  when $x_0 \leq x_1 \leq \ldots \leq x_p$. 
We call $\O(X,\leq)$ the \emph{preorder OT-complex} of $(X,\leq)$. 
\end{definition}


\begin{definition}
Given a preorder $(X,\leq)$, its associated order complex $\Delta(X,\leq)$ is an abstract simplicial complex whose vertices are the elements of $X$ and whose faces are the chains (subsets where each pair is comparable) of $(X,\leq)$. 
\end{definition}

From a filtration of preorders we can construct a filtration of OT-complexes. From this persistence modules can be constructed as standard with OT-homology classes as the vector spaces and induced maps from inclusions as the transition maps. 

\begin{definition}
Let $\O(X,f)=\{\O(X,f)_t\}$ be the filtration of OT-complexes corresponding to  the filtration of posets $\{(X_t, \leq_t)\}$. We call $\O(X,f)$ the \emph{preorder filtration} of $(X,f)$. 
\end{definition}


 
 In the appendix we see that the simplicial homology of the order complex $\Delta(X,\leq)$ is naturally isomorphic to the homology of preorder OT-complex $\O(X,\leq)$. Moreover, isomorphisms between the simplicial homology of the order complexes and the homology of the preorder OT-complexes will extend to persistent homology as they commute with the maps on homology induced by inclusions.
 
 \begin{definition}
We define the $k^{th}$ dimensional \emph{preorder persistence module} corresponding to the filtration of preorders $\mathcal{X}=\{(X_t, \leq_t)\}$ as the dimension $k$ OT-homology persistence module for the filtration of OT-complexes $\{\O(X,\leq_t)\}_{t\in \R}$.
 \end{definition}
 

 
Just as in the previous constructions in this paper we can prove that the corresponding persistence modules built from functions $f:X\times X\to \R$ and $g:Y\times Y\to \R$ are stable with respect to the correspondence distortion distance.

\begin{theorem}\label{theorem:poset}
Let $(X,f)$ and $(Y,g)$ be set-function pairs with preorder Rips filtrations  $\O(X,f)$ and $\O(Y,g)$.
Let $\P^X$ and $\P^Y$ be the $k$th dimensional persistence modules for $\O(X,f)$ and $\O(Y,g)$ respectively.
 Then $d_{int}(\P^X, \P^Y)\leq 2d_{CD}((X,f),(Y,g))$.
\end{theorem}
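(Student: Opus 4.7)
The plan is to mirror the proofs of Theorems \ref{thm:Rips} and \ref{thmDirRips}, replacing the single-edge distance conditions used there by reachability (path) conditions in the associated directed graphs. First I would invoke Proposition \ref{prop:expandqms} to obtain expanded pseudo-quasi-metric spaces $\tilde{X}, \tilde{Y}$, projections $\pi_X:\tilde{X}\to X$ and $\pi_Y:\tilde{Y}\to Y$, and a bijection $\psi:\tilde{X}\to\tilde{Y}$. Write $\O(\tilde{X})$ and $\O(\tilde{Y})$ for the corresponding poset Rips filtrations and $\P^{\tilde{X}}, \P^{\tilde{Y}}$ for their $k$th dimensional persistence modules. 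All the OT-complexes in sight are automatically closed under adjacent repeats, because the induced preorder is reflexive, so $(v_0,\ldots,v_i,v_i,\ldots,v_p)$ is a valid chain whenever $(v_0,\ldots,v_p)$ is.

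Next I would show that $\O(\tilde{X})$ is an expansion of $\O(X)$ via $\pi_X$. Taking $\hat{x}=\pi_X(x)$ in property (1) of Proposition \ref{prop:expandqms} yields $d_{\tilde{X}}(x,\pi_X(x))=d_{\tilde{X}}(\pi_X(x),x)=0$, so every point is at distance zero from its projection in both directions. If $(v_0,\ldots,v_p)\in\O(\tilde{X})_t$, then for each $i$ there is a path $v_i=w^i_0,w^i_1,\ldots,w^i_{m_i}=v_{i+1}$ in $D(\tilde{X})_t$, and applying $\pi_X$ edgewise (using that $\pi_X$ is distance-preserving) gives a path from $\pi_X(v_i)$ to $\pi_X(v_{i+1})$ in $D(X)_t$; hence $(\pi_X(v_0),\ldots,\pi_X(v_p))\in\O(X)_t$. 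Conversely, given such a chain in $\O(X)_t$, each constituent path can be lifted to $D(\tilde{X})_t$ by prepending the arrow $v_i\to \pi_X(v_i)$ and appending $\pi_X(v_{i+1})\to v_{i+1}$ (both have weight $0\le t$) and using that $X\subset\tilde{X}$ with distances preserved. Thus Proposition \ref{prop:expansion} gives $\P^{\tilde{X}}\cong\P^X$, and the symmetric argument gives $\P^{\tilde{Y}}\cong\P^Y$.

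It remains to build a $2d_{GH}(X,Y)$-interleaving between $\P^{\tilde{X}}$ and $\P^{\tilde{Y}}$ via $\psi$. Part (3) of Proposition \ref{prop:expandqms} gives $d_{\tilde{Y}}(\psi(w),\psi(w'))\le d_{\tilde{X}}(w,w')+2d_{GH}(X,Y)$ for all $w,w'$; applying this edge by edge, any path in $D(\tilde{X})_t$ is carried to a path in $D(\tilde{Y})_{t+2d_{GH}(X,Y)}$. Consequently, if $(v_0,\ldots,v_p)\in\O(\tilde{X})_t$ then $(\psi(v_0),\ldots,\psi(v_p))\in\O(\tilde{Y})_{t+2d_{GH}(X,Y)}$, giving a chain map $\psi_\#$ and thus $\alpha_t=\psi_\ast$ on homology; part (4) gives the reverse map $\beta_t=(\psi^{-1})_\ast$. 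The squares in \eqref{eq:comdia1} commute because $\psi$ and $\psi^{-1}$ commute with the filtration inclusions at the chain level, and the triangles in \eqref{eq:comdia2} commute because $\psi^{-1}\circ\psi=\id_{\tilde{X}}$ and $\psi\circ\psi^{-1}=\id_{\tilde{Y}}$ on vertex sets, so the resulting composition on chains is literally the identity, matching the inclusion-induced map. Composing the $0$-interleavings from the previous step with this $2d_{GH}(X,Y)$-interleaving yields the theorem.

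The main obstacle, and the only real difference from Theorems \ref{thm:Rips} and \ref{thmDirRips}, is the expansion step: membership in $\O(X)_t$ is controlled by reachability rather than a single pairwise distance, so one must show that the projection $\pi_X$ preserves paths and that paths downstairs can be lifted to $\tilde{X}$. The $0$-distance between $x$ and $\pi_X(x)$ supplied by Proposition \ref{prop:expandqms} is exactly what is needed for the lift. Once this is in hand, the transfer of paths under $\psi$ and the commutativity of the interleaving diagrams are formally the same calculations as in the earlier theorems.
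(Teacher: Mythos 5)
Your proposal is correct and follows essentially the same route as the paper: expand $X$ and $Y$ via Proposition \ref{prop:expandqms}, use Proposition \ref{prop:expansion} to reduce to $\tilde{X},\tilde{Y}$, and transfer paths edge by edge under $\psi$ and $\psi^{-1}$ to get the $2d_{GH}(X,Y)$-interleaving. Your explicit two-directional argument for the expansion step (projecting paths down and lifting them back using the zero distance between a point and its projection) fills in a detail the paper only asserts, but it is the same argument in substance.
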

\begin{proof}
%
%
%

Since $d_{CD}((X,f),(Y,g)) $ is finite there exists some correspondence $\M\subset X\times Y$ with $dis_{(X,f), (Y,g)}(\M)$ finite. Fix a correspondence $\M\subset X\times Y$ with $dis_{(X,f), (Y,g)}(\M)$ finite. From this correspondence construct preorder Rips filtrations $\{\O(\M,f^\M)_t\}$ and $\{\O(\M,g^\M)_t\}$ with corresponding  $k$-th dimensional persistence modules $P^{(X,\M)}$ and $P^{(Y,\M)}$.

By construction $\{\O(\M,f^{\M})_t\}$ is an expansion of $\{\O(X,f)_t\}$ and thus by Proposition \ref{prop:expansion} we know that the persistence modules $P^X$ and $P^{(X,\M)}$ are isomorphic. Similarly we can also show that  $P^Y$ and $P^{(Y,\M)}$ are isomorphic.

If $((x_0,y_0), (x_1,y_1), \ldots, (x_n,y_n))\in \O(\M,f^{\M})_t$ then $x_0, \ldots , x_n\in D(X)_t$ and there exist directed paths from $x_i$ to $x_j$ in $D(X,f)_t$ for all $i\leq j$. By Lemma \ref{lem:path} there must exist a directed paths from $y_i$ to $y_j$ in $D(Y,g)_{t+dis_{(X,f), (Y,g)}(\M))}$ for all $i\leq j$.  This implies that $ \O(\M,f^{\M})_t \subset \O(\M,g^{\M})_{t+dis_{(X,f), (Y,g)}(\M))}$ for all $t$. Similarly $ \O(\M,g^{\M})_t \subset \O(\M,f^{\M})_{t+dis_{(X,f), (Y,g)}(\M))}$. 

These inclusion maps induce a $dis_{(X,f), (Y,g)}(\M))$ interleaving between $P^{(X,\M)}$ and $P^{(Y,\M)}$. This implies that $P^{X}$ and $P^{Y}$ are  $dis_{(X,f), (Y,g)}(\M))$ interleaved.

By considering the infimum of the interleavings constructed by correspondences we see that the interleaving distance between $P^X$ and $P^Y$ is bounded above by $2d_{CD}((X,f),(Y,g))$.
\end{proof}

As shown in Theorem \ref{thm:preorderhomology} (in the Appendix), the simplicial homology of the order complex is naturally isomorphic to the OT-homology of $\O(X,\leq)$. Furthermore, this isomorphism result holds also for persistence modules of filtrations of simplicial complexes as the isomorphisms on homology groups commute with the induced maps on homology by inclusions. This implies that interval decomposition of the $k^{th}$ preorder persistence modules can be computed via the simplicial persistent homology over the filtration of simplicial complexes $\{\Delta(X_t, \leq^f_t)\}$. 

\section{Future Directions}
There are many future directions related to the research in this paper. Examples include: 
\begin{itemize}
\item Applying the constructions in this paper to quasi-metric spaces to see what they reveal about their quasi-metric structure, or to use as a method of getting a lower bound on the correspondence distortion distance distance between different quasi-metric spaces.
\item To adapt these methods to construct persistence modules for sub-level set filtrations of special functions on quasi-metric spaces and proving related stability results. For example, we conjecture that the all four constructions
built from a suitably defined sublevel sets of the extremity function of a quasi-metric space (analogous to constructions in \cite{chazal2009gromov}) could have correspondence distortion distance stability with respect to the original quasi-metric distance functions. This would provide another way of capturing the ``shape'' of a quasi-metric space. 
\item Finding nice sufficient conditions on functions $f:X \times X \to \R$, with $|X|$ infinite, as to when these various Rips constructions create tame persistence modules. Even when restricting to the case of quasi-metric spaces it is not even clear how we should define an $\epsilon$-sampling or compactness. In the symmetric case, definitions have been used to describe sufficient conditions for metric spaces that result in tame persistence modules (such as in \cite{CSO}). 
\item Algorithmic techniques for computing OT persistent homology efficiently. In particular is there a related filtration of simplicial complexes that have the isomorphic OT- persistent homology, at least in low homology dimensions?
\end{itemize}
\section{APPENDIX}

\subsection{Algorithm to compute interval decomposition of the  strongly connected persistence module
}

\begin{algorithmic}[1]
\INPUT List $L$ of vertices $V=\{v_1, v_2, \ldots v_n\}$ and directed edges $\{(v_{i_1}\to v_{j_1}), \ldots (v_{i_m}\to v_{j_m})\}$, each with a real valued height such that $h(v_i\to v_j)\geq \max\{h(v_{i}), h(v_j)\}$. These vertices and directed edges are ordered in a combined list $L$ by increasing height values. All the vertices at a height value occur before the edges at that same height.
\OUTPUT Interval decomposition of the strongly connected component persistence module from filtration of sublevel sets of the height function
\Function{Find}{$x$}
\While{root($x$)$!=x$} 
		\State{$x=$root$(x)$}
\EndWhile
 \State \textbf{return} $x$
\EndFunction
\\
\Procedure{Union}{$v_{tail}$, $v_{head}$, height}
\State $W=v_{tail}.in \cap v_{head}.out$
\State $\hat{w}:=$ earliest $w\in W$ to appear in list $L$
\For{$w\in W$, $w\neq \hat{w}$}
		\State $root(w)=\hat{w}$\;
		\If{$h(w)<height$}	
			\State append $[h(w), height)$ to BARCODE
		\EndIf
		\State $\hat{w}.in=\{$\Call{Find}{$x$} for $x\in v_{tail}.in\}$ \Comment A s.c.c. has a path to $\hat{w}$ iff it has a path to $v_{tail}$
		\State $\hat{w}.out=\{$\Call{Find}{$x$} for $x\in v_{head}.out\}$\Comment A s.c.c. has a path from $\hat{w}$ iff it has a path from $v_{head}$
		\For{$x\in \hat{w}.in$}
			\State$x.out=\{$\Call{Find}{y} for $ y \in x.out\cup \hat{w}.out\}$ 
		\EndFor
		\For{$x\in \hat{w}.out$}
			\State $x.in=\{$\Call{Find}{y} for $ y \in x.in\cup \hat{w}.in \}$ 
		\EndFor
		\EndFor
\EndProcedure
\\
\Procedure{UpdateInOut}{$v_{tail}$, $v_{head}$, height}
	\For{$x\in v_{tail}.in$}
	  	\State$x.out=\{$\Call{Find}{y} for $ y \in  v_{head}.out \cup x.out\}$
	\EndFor
	\For{$y\in v_{head}.out$}
		\State$y.in=\{$\Call{Find}{x} for $ x \in  v_{tail}.in\cup y.in \}$	
	\EndFor
\EndProcedure
\\
\For{$i=1$ to $length(L)$}
	\If{$L(i)$ is a vertex $v_k$}
		\State Add a vertex to $A$. Label it with (height$=h(L(i))$, root$=v_k$, in$=\{v_k\}$, out$=\{v_k\}$)
		\If{$L(i)$ is a directed edge $v_j \to v_k$}
		\State $v_{tail}:=\Call{Find}{v_j}$, $v_{head}:=\Call{Find}{v_k}$
			\If{$v_{head} \notin v_{tail}.out$}
				\If{$v_{head} \notin v_{tail}.in$} \Comment We need to update the paths between s.c.c.s 
					\State \Call{UpdateInOut}{$v_{tail}$, $v_{head}$}
				\EndIf
				\If{$v_{head} \in v_{tail}.in$} 	\Comment This is when various s.c.c.s merge.				
					\State \Call{Union}{$v_{tail}$, $v_{head}$, $h(L(i))$}
				\EndIf
			\EndIf
		\EndIf
	\EndIf
\EndFor			

	\State RemainingComponents:=$\{\Call{Find}{x} \text{ for } x \in V\}$\Comment Final set of strongly connected components
	\For{$x\in$ RemainingComponents}
	\State Append $[h(x),\infty)$ to BARCODE
	\EndFor
\end{algorithmic}

\subsection{Homology of a poset}
There are multiple ways to compute the homology of a poset, including via Alexandrov topological spaces and order simplicial complexes.  For each preorder there is a canonical poset we call its equivalence class poset.  In this appendix we show that the definitions of homology of a poset can naturally be extended to preorders. Furthermore, the resulting homology of a preorder is naturally isomorphic to the homology of its equivalence class poset. This justifies the constructions in section \ref{sec:poset}.

An \emph{Alexandrov topology} is a topology in which the intersection of any family of open sets is open. It is an axiom of topology that the intersection of any finite family of open sets is open; in Alexandrov topologies the finite restriction is dropped.  Given an Alexandrov topology we can construct a special preorder called its specialisation preorder. 

\begin{definition}
Let $X =(X,\tau)$ be an Alexandrov space.
The \emph{specialisation preorder} on X is the preorder where $x\leq y$ if and only if $x$ is in the closure of $\{y\}$.
\end{definition}

In the other direction, given an preorder $(X,\leq)$ there is a unique Alexandrov topology whose specialisation preorder is $(X,\leq)$. To construct this, let the open sets $\tau$ on $X$ be the upper sets:
$$\tau=\{ U\subset X: \forall x,y \in X, \text{ if } x\leq y \text{ and } x\in U \text{ then } y\in U\}.$$
The corresponding closed sets for $\tau$ are the lower sets:
$$ \{\,S\subseteq X:\forall x,y \in X, \text{ if }  x\in S \text{ and } y\leq x \text{ then } y\in S\,\}.$$ The topology $\tau$ is generated by the sets $U_x=\{y:x\leq y\}$. 

A topological space $X$ is a \emph{$T_0$ space} if for any pair of points in $X$ there exists an open set containing one and only one of them. It is an exercise to see how the antisymmetry condition of posets directly corresponds to the Alexandrov topologies that are $T_0$.  

We can construct $T_0$ spaces by taking Kolmogorov quotients. The \emph{Kolmogorov quotient} of a topological space is defined as its quotient by the equivalence relation of topological indistinguishability, equipped with the quotient topology.  

There is a natural way of constructing a poset from a preorder by using quotients. For $(X,\leq)$ a preorder define an equivalence relation $x\sim y$ when $x\leq y$ and $y\leq x$. Let $\tilde{X}=X/\sim$ be the quotient space on these equivalence classes. It is easy to check that the binary relation $\leq$ is now well defined on $\tilde{X}$ and that $(\tilde{X},\leq)$ is a poset. We will call $(\tilde{X},\leq)$ the \emph{equivalence class poset} of $(X,\leq)$. The following lemma states the relationship between a preorder and its equivalence class poset is analogous to taking the Kolmogorov quotient of its Alexandrov topology. The proof for finite spaces is Lemmas 8 and 9 in \cite{McCord}, and the extension to infinite spaces can be proved similarly (see \cite{kukiela2010homotopy}).

\begin{lemma}\label{lem:Kolmogorov}
Let $(X,\leq)$ be a finite preorder with equivalence class poset $(\tilde{X},\leq)$. The Alexandrov topology of $(\tilde{X},\leq)$ is the Kolmogorov quotient of the Alexandrov topology of $(X,\leq)$. Furthermore, the Alexandrov topologies of  $(X,\leq)$ and $(\tilde{X},\leq)$ are homotopy equivalent.
\end{lemma}

%
Since homology is defined up to weak homotopy equivalence, often in analysis researchers restrict their analysis from general topological spaces to $T_0$-spaces as they do not lose any homological information by taking the Kolmogorov quotient. Thus many definitions are stated as for posets even though they could be defined for all preorders.

One definition of the homology of a poset is the singular homology of the Alexandrov topologies which has that poset as its specialisation order. Since the specialisation orders of Alexandrov topologies provide a one to one correspondence between Alexandrov topologies and preorders, we can generalise this to define the homology of a preorder as the singular homology of the Alexandrov topologies which has that preorder as its specialisation order.

A \emph{chain} in a poset is defined as a subset of elements which are all pairwise comparable. Note that there is no order of the elements given as part of the information of the chain but that the transitivity of a preorder will ensure that there exists a total ordering of any chain. In a poset the antisymmetry condition ensures that this order is unique. In a general preorder multiple possible orders might be possible.

In a poset, the unique ordering of elements in a chain means we can define chain complexes and homology groups for a poset directly via chains.
We thus say that an \emph{$m$-chain} of a poset $P$ is a totally ordered subset $c = (x_0 < x_1 < \ldots < x_{m})$ of $P$ written in order. We can construct a chain complex by setting $C_j(P, R)$ to be the $R$-module freely generated by $j$-chains, and defining boundary maps $\partial_j: C_j(P)\to C_{j-1}(P)$ by  $\partial_j(x_0 < x_1 < \ldots < x_{m})=\sum_{i=0}^{m} (-1)^{i} (x_0<x_1<\ldots \hat{x_i} \ldots <x_m)$ and extending linearly.

We can observe that this chain complex is exactly that for ordered sets (see subsection \ref{sec:OS}). If we specify the order of each chain, we can extend this definition to preorders as the OS-homology. Generally the OS-homology and the OT-homology are not isomorphic (see section \ref{sec:OS}). However, in the special case of posets they do define isomorphic homology groups, as proved below in Theorem \ref{prop:equiv}.

An alternative definition for the homology of a poset is via the construction of its associated order simplex. The associated order complex $\Delta(X,\leq)$ for the poset $(X,\leq)$ is the abstract simplicial complex whose vertices are the elements of $X$ and whose faces are the chains (subsets where each pair is comparable) of $(X,\leq)$. The definition of the associated order complex of a preorder given in section \ref{sec:poset} restricts to the standard definition for posets.  

The following theorem presents some relationships between these different homology constructions.

\begin{theorem}\label{prop:equiv}
Let $(\tilde{X},\leq )$ be a poset. The following homology groups are all isomorphic: 
\begin{itemize}	
\item[(i)] OS-homology of the finite chains of $(\tilde{X}, \leq)$
\item[(ii)] OT-homology of preorder OT-complex $\O(\tilde{X},\leq)$
\item[(iii)] simplicial homology of the order complex $\Delta(\tilde{X},\leq)$ 
\item[(iv)] singular homology of the Alexandrov topology with specialisation order $(\tilde{X}, \leq)$
\end{itemize}
\end{theorem}
\begin{proof}

The proof that $(ii)$ and $(iv)$ are isomorphic is in Theorem 2 in \cite{McCord}. The isomorphism between $(i)$ and $(iii)$ is via the unique total orderings of each simplex in the order complex. It is the induced map on homology of $(x_0< x_1<\ldots x_k) \mapsto [x_0, x_1, \ldots x_k]$. 

We will now prove that $(i)$ is isomorphic to $(ii)$.
The set of ordered tuples forms a basis $B$ for $OT(\tilde{X},\leq)$. Define $\Phi:B \to \{\text{subcomplexes of } OT(\tilde{X},\leq)\}$ by setting $\Phi(\tau)$ to be the subcomplex of $OT(\tilde{X},\leq)$ containing only ordered tuples with elements within $\tau$. Since $\tau$ is an ordered tuple, it has a smallest element $x$ and for any $\alpha\in \Phi(\tau)$ the ordered tuple concatenating $x$ in front of $\alpha$ (which we will denote by $(x\alpha)$) is also an element in $\Phi(\tau)$. Given a boundary $\alpha$, we can see that $\partial(x\alpha)=\alpha -(x\partial(\alpha))=\alpha$. This implies that $\Phi$ is an acyclic carrier. 

Set $f:OT(\tilde{X},\leq)\to OT(\tilde{X},\leq)$ by $f(\tau)$ the identity when $\tau$ does not contain repeats (i.e. lives in $OS(\tilde{X},\leq)$ and $f(\tau)=0$ if $\tau$ contains a repeat). $f$ commutes with the boundary map because all repeats of a particular element in a tuple must be consecutive when working with posets. It is this claim that does not hold more generally between OT-complexes and OS-complexes. Since both $f$ and the identity map are both carried by $\Phi$ the acyclic carrier theorem (see \cite{munkres1984elements}) ensures that $f$ and the identity map are chain homotopic and hence the OS-homology of the finite chains of $(\tilde{X}, \leq)$ and the  OT-homology of preorder OT-complex $\O(\tilde{X},\leq)$ are isomorphic.

\end{proof}

Each of these four different constructions of homology groups for posets can be generalised to preorders. Three of these generalise in a way that the homology groups are invariant under taking equivalence class posets (or equivalently under taking Kolmogorov quotients). The OS-homology of chains is the odd one out in this respect. A counter example is the preorder $X=\{x,y\}$ with $x\leq y$ and $y\leq x$. It has non-trivial OS-homology in dimension one but its equivalence class poset $\tilde{X}=\{[x]\}$ has trivial OS-homology in dimension one.

\begin{theorem}\label{thm:preorderhomology}
Let $(X,\leq)$  be a preorder with equivalence class poset $(\tilde{X},\leq)$.Then
\begin{itemize}
\item[(a)] The preorder OT-complex $\O(X,\leq)$ is an expansion of $\O(\tilde{X},\leq)$ and hence has the same OT-homology. 
\item[(b)] There is a natural projection map from $\Delta(X,\leq)$ to $\Delta(\tilde{X}, \leq)$. This projection map induces an isomorphism on their simplicial homology groups.
\item[(c)] The singular homology of the Alexandrov topology with specialisation order $(X, \leq )$ is isomorphic to the singular homology of the Alexandrov topology with specialisation order $(\tilde{X}, \leq )$. 
\end{itemize}
\end{theorem}

\begin{proof}
(a) The OT-complexes $\O(X,\leq)$ and $\O(\tilde{X},\leq)$ are closed under adjacent repeats by construction. The quotient maps sending $X$ to its equivalence class poset $\tilde{X}$ shows that  $\O(X,\leq)$ is an expansion of $\O(\tilde{X},\leq)$. We conclude that they are isomorphic by applying Proposition \ref{prop:expansion}.

(b) Construct a map $f:\tilde{X} \to X$ by fixing a representative $x\in X$ for each equivalence class $[x]\in \tilde{X}$. We can embed $\Delta(\tilde{X}, \leq)$ into $\Delta(X, \leq)$ via the induced map of $f$. A straight line homotopy provides a deformation from $\Delta(X, \leq)$ to $f(\Delta(\tilde{X}, \leq))$. The result then follows because deformation retractions induce isomorphisms on homology classes.

(c) The proof follows from Lemma \ref{lem:Kolmogorov} as homotopic topological spaces have isomorphic singular homology groups.
\end{proof}

Combining these theorems we conclude that the OT-homology of preorder OT-complexes, simplicial homology of the associated order complex of a preorder and the singular homology of the Alexandrov topology of a preorder are all isomorphic. These isomorphisms extend to persistent homology as they commute with the maps on homology induced by inclusions.

%
%
%
%

\end{document}